\documentclass[a4paper,10pt]{article}

\usepackage{graphicx}
\usepackage{geometry}
\usepackage{amsthm}
\usepackage{amssymb}
\usepackage{amsmath}
\usepackage{hyperref}
\usepackage{xcolor}
\usepackage{enumerate}
\usepackage{listings}
\usepackage{complexity}
\usepackage{blkarray}

\usepackage[font=small,labelfont=bf]{caption}
\usepackage[font=small,labelfont=normalfont,labelformat=simple]{subcaption}


\newcommand{\adi}{{\rm adi}}
\renewcommand{\deg}{{\rm deg}}
\newcommand{\ava}{{\rm ava}}
\newcommand{\va}{{\rm va}}
\newcommand{\dac}{{\rm dac}}
\widowpenalty10000
\clubpenalty10000

\graphicspath{{figs/}}

\newtheorem{theorem}{Theorem}
\newtheorem{definition}{Definition}
\newtheorem{lemma}[theorem]{Lemma}
\newtheorem{corollary}[theorem]{Corollary}

\newtheorem{observation}{Observation}
\newtheorem{proposition}{Proposition}

\newcommand*\samethanks[1][\value{footnote}]{\footnotemark[#1]}

\author
{
Stefan Felsner\thanks{Institut f\"ur Mathematik, Technische Universit\"at Berlin, Germany, \hfill{}\goodbreak
\texttt{\{felsner,steiner\}@math.tu-berlin.de}}
\and
Winfried Hochst\"{a}ttler \thanks{Fakult\"{a}t f\"ur Mathematik und Informatik, FernUniversit\"{a}t in Hagen, Germany, \hfill{}\goodbreak
\texttt{winfried.hochstaettler@fernuni-hagen.de}}
\and
Kolja Knauer \thanks{Université Aix-Marseille
Laboratoire d'Informatique et des Systèmes (LIS)
Algorithmique, Combinatoire et Recherche Opérationnelle (ACRO), France \hfill{}\goodbreak
\texttt{kolja.knauer@lis-lab.fr}}
\and
Raphael Steiner\samethanks[1]{}
}

\date{\today}

\title{Complete Acyclic Colorings}

\begin{document}
\maketitle

\begin{abstract}
We study two parameters that arise from the dichromatic number and the vertex-arboricity in the same way that the achromatic number comes from the chromatic number.
The \emph{adichromatic number} of a digraph is the largest number of colors its vertices can be colored with such that every color induces an acyclic subdigraph but merging any two colors yields a monochromatic directed cycle. Similarly, the \emph{a-vertex arboricity} of an undirected graph is the largest number of colors that can be used such that every color induces a forest but merging any two yields a monochromatic cycle. 
We study the relation between these parameters and their behavior with respect to other classical parameters such as degeneracy and most importantly feedback vertex sets.
\end{abstract}

\section{Introduction}
All digraphs and graphs in this paper are considered loopless. For digraphs, we allow parallel and anti-parallel edges, graphs may have multiple edges. An anti-parallel pair of edges in a digraph will be called a \emph{digon} and treated as a directed cycle of length $2$, while a parallel pair of edges in an undirected graph will be called a \emph{bigon} and treated as an undirected cycle of length $2$. The character of our problems often depends on the existence of digons and bigons, respectively. We will emphasize this distinction at the respective points.

A \emph{complete coloring} of a graph is a proper vertex coloring such that the identification of any two colors produces a monochromatic edge. The \emph{achromatic number} $\Psi(G)$ is the {maximum} number of colors in a complete coloring. There has been a substantial amount of research on the achromatic number since its introduction in~\cite{achromatic}, we refer to~\cite{survey1} and~\cite{survey2} for survey articles on this topic.

In the same spirit, for most coloring parameters an associated notion of \emph{complete coloring} and an 'a-parameter' may be defined.
The \emph{dichromatic number} of directed graphs has been studied as a natural directed analogue of the chromatic number of graphs. One of the main interest is to compare its properties to the undirected theory, see~\cite{HARUTYUNYAN2019,dig4,fractionalNL} for some recent results. A coloring in this setting consists of a partition of the vertex set into subsets inducing acyclic subdigraphs. 

Similarly, for an undirected graph, the \emph{vertex arboricity} is defined to be the minimal number of induced forests which cover all the vertices. This is another well-studied parameter, see e.g. \cite{Chartrand1968}, \cite{10.1112/jlms/s1-44.1.612}. Note that if a digraph has no digons, then its dichromatic number is at most the vertex-arboricity of the underlying undirected graph, while  the dichromatic number of a bidirected digraph coincides with the chromatic number of the underlying undirected graph.

In this paper, we investigate complete colorings corresponding to the two above coloring parameters, resulting in the {adichromatic number} of directed graphs and the {a-vertex arboricity} of undirected graphs. More precisely, the \emph{adichromatic number} $\adi(D)$ of a directed graph $D$ is the largest number of colors its vertices can be colored with such that every color induces an acyclic subdigraph but in the merge of any two color classes there is a directed cycle. We refer to such a coloring as a \emph{complete (acyclic) coloring} of $D$. Similarly, the \emph{a-vertex arboricity} $\ava(G)$ of an undirected graph $G$ is the largest number of colors that can be used such that every color induces a forest but in the merge of any two color classes there is a cycle. Such a coloring will be referred as a \emph{complete (arboreal) coloring} of $G$.

Similar parameters have been introduced in~\cite{harmonious,oriented}. In particular, the \emph{diachromatic number} was introduced in~\cite{diachromatic} and sparked our investigations. While it is closest to our parameter it still behaves quite differently (Proposition~\ref{farfromtight}).
Here, we initiate the study of $\adi$ and $\ava$, their relation to other graph parameters and their behavior with respect to graph and digraph operations. In particular, we obtain an interpolation theorem, similar statements have been proven for other parameters (Theorem~\ref{interpolation}).

Our main results concern 
the relation of $\adi$ and $\ava$ to important graph and digraph parameters such as the degeneracy and more importantly the size $\tau$ of a smallest feedback vertex set. 
While it is easy to see that both $\adi$ and $\ava$ are bounded from above by $\tau+1$ (Proposition~\ref{FVupperbound}), there is no function $f$ such that $\tau\leq f(\adi)$ in the directed setting (Proposition~\ref{nobound}). On the other hand we show that there is an $f$ such that $\tau\leq f(\ava)$ in the undirected setting for simple graphs (Theorem~\ref{FVlowerbound}).
We complement the negative results for digraphs by showing that for orientations of members of any fixed non-trivial minor closed class of graphs (for instance for simple planar digraphs), there is an $f$ such that $\tau\leq f(\adi)$ (Theorem \ref{minor-closed}).

A corollary of the above is that there is an $f$ such that $\adi(D)\leq f(\ava(G))$ for any orientation $D$ of a simple graph $G$ (Corollary~\ref{interplay}). We moreover show that any graph with sufficiently high a-vertex arboricity has an orientation with large adichromatic number (Proposition~\ref{outerplay}).

In a similar vein, we show that there are arbitrarily dense digraphs with adichromatic number $1$, but the degeneracy of a simple graph is upper bounded by a function of $\ava$ (Theorem~\ref{degeneracy}). 

From a more general point of view, we discuss the relations of our findings to the Erd\H{o}s-P\'osa property and introduce a strengthening of the latter that is called $\tau$-boundedness.

\paragraph{Structure of the paper:}
Section~\ref{sec:prelim} contains first observations and results. It consists of Subsection~\ref{subsec:param}, which studies some relations between the parameters and contains the interpolation theorem, and of Subsection~\ref{subsec:operations}, which contains results on the behavior of our parameters with respect to graph and digraph operations.
Section~\ref{main} contains our main results, i.e., the above mentioned relations of $\ava$ and $\adi$ with $\tau$ and the degeneracy are given.
Finally, Section~\ref{sec:discuss} contains the discussion of $\tau$-boundedness and the Erd\H{o}s-P\'osa property in Subsection~\ref{subsec:erdosposa} as well as properties with respect to randomness in Subsection~\ref{subsec:random}.

\section{First observations and results}\label{sec:prelim}
In this section, we present some basic properties of the adichromatic number and the a-vertex arboricity, discuss the relation of the adichromatic number and some other notions of complete coloring and make observations which will be used frequently in the rest of the paper.

\subsection{Relations between parameters}\label{subsec:param}

A fundamental property of an optimal proper coloring of a digraph (i.e., without monochromatic directed cycles) is that we cannot improve the coloring by any merge of two colors. Therefore, any coloring of a digraph $D$ with $\vec{\chi}(D)$ colors must be complete. With the analogous argument for the a-vertex arboricity, we get the following basic relations.
\begin{observation} \label{divsadi}
\noindent
\begin{itemize}
\item $\vec{\chi}(D) \leq \adi(D)$ for any digraph $D$.
\item $\va(G) \leq \ava(G)$ for any graph $G$.
\end{itemize}
\end{observation}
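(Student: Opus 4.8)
The plan is to show that an \emph{optimal} acyclic coloring is automatically complete, so that no separate construction is needed. Concretely, for a digraph $D$ I would fix any proper acyclic coloring $c$ using exactly $k := \vec{\chi}(D)$ colors --- one exists by definition of the dichromatic number --- and argue that $c$ is a complete acyclic coloring. Since $\adi(D)$ is by definition the maximum number of colors over all complete acyclic colorings of $D$, exhibiting one such coloring with $k$ colors already yields $\adi(D) \ge k = \vec{\chi}(D)$.

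The only step to check is completeness of $c$, which I would do by contradiction. Suppose some merge of two color classes of $c$ induces no monochromatic directed cycle. Then recoloring all vertices of one of these two classes with the color of the other produces a proper coloring of $D$ with $k-1$ colors: the merged class is acyclic by hypothesis, and every other class is left untouched and hence still induces an acyclic subdigraph. This contradicts the minimality $k = \vec{\chi}(D)$. Therefore every merge of two classes of $c$ creates a monochromatic directed cycle, i.e., $c$ is complete.

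For the second bullet I would run the identical argument with ``acyclic subdigraph'' replaced by ``forest'', ``directed cycle'' by ``cycle'', and $\vec{\chi}$ by $\va$: a vertex-arboricity-optimal coloring of $G$ into $\va(G)$ forests cannot be shortened by merging two classes, hence is a complete arboreal coloring, giving $\ava(G) \ge \va(G)$. I do not expect any genuine obstacle here; the only point worth stating carefully is that a merge leaves the remaining color classes unchanged, so their acyclicity (respectively, forest property) is preserved for free, which is precisely what makes the shortened coloring admissible and drives the contradiction.
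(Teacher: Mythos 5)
Your argument is exactly the paper's: an acyclic coloring with the minimum number $\vec{\chi}(D)$ of colors cannot admit an acyclic merge of two classes (else it would not be minimal), hence it is complete and witnesses $\adi(D)\ge\vec{\chi}(D)$, with the identical reasoning for $\va$ and $\ava$. The proposal is correct and matches the paper's proof.
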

Clearly, the only digraphs with adichromatic number $1$ are DAGs, and similarly the only graphs with a-vertex arboricity $1$ are forests. The same is true for the dichromatic number and the vertex arboricity, and so in these very special cases, the concepts of coloring and complete coloring coincide. The examples from Proposition \ref{examples} below show that in general, no such relation exists, as there are digraphs with dichromatic number $2$ and unbounded adichromatic number and graphs with vertex arboricity $2$ but unbounded a-vertex arboricity.

A \emph{feedback vertex set} in a digraph $D$ is a set $F \subseteq V(D)$ whose deletion results in an acyclic digraph, that is, every directed cycle uses a vertex from $F$. Similarly, a feedback vertex set in a graph $G$ is a set of vertices whose deletion results in a forest, i.e., every cycle uses a vertex from the feedback set. The minimal size of a feedback vertex set in a digraph or graph will be denoted by $\tau(D)$ respectively $\tau(G)$. 
%
We start with the following fundamental relation of the treated coloring parameters to feedback vertex sets.
\begin{proposition} \label{FVupperbound}
\noindent
\begin{itemize}
\item For any digraph $D$, we have $\adi(D) \leq \tau(D)+1$.
\item For any graph $G$, we have $\ava(G) \leq \tau(G)+1$.
\end{itemize}
\end{proposition}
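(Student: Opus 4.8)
The plan is to prove both statements simultaneously, since the arguments are formally identical: replace "directed cycle" by "cycle" and "acyclic subdigraph" by "forest". Let me spell out the digraph case. Suppose $D$ has a complete acyclic coloring with color classes $V_1, \dots, V_k$, and let $F \subseteq V(D)$ be a minimum feedback vertex set, so $|F| = \tau(D)$. The key observation is that $F$ can meet at most one color class trivially, in the following sense: I claim that all but at most one of the classes $V_i$ must contain a vertex of $F$. Indeed, if two classes $V_i$ and $V_j$ were both disjoint from $F$, then $V_i \cup V_j \subseteq V(D) \setminus F$, and since $D - F$ is acyclic, the merged class $V_i \cup V_j$ would induce an acyclic subdigraph — contradicting completeness of the coloring, which demands a monochromatic directed cycle in $V_i \cup V_j$. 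Hence at least $k-1$ of the color classes intersect $F$, and since the classes are pairwise disjoint, $k - 1 \leq |F| = \tau(D)$, i.e. $\adi(D) \leq \tau(D) + 1$.

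For the graph statement, one runs the same argument with $G - F$ being a forest: if $V_i, V_j$ are both disjoint from $F$ then $V_i \cup V_j$ induces a subgraph of the forest $G-F$, hence a forest, so merging $V_i$ and $V_j$ creates no cycle, contradicting completeness. Therefore again at least $\ava(G) - 1$ color classes meet $F$, giving $\ava(G) \leq \tau(G) + 1$.

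I do not anticipate a genuine obstacle here; the statement is essentially a pigeonhole argument on the color classes versus the feedback vertex set, and the only thing to be careful about is the edge case where exactly one class avoids $F$ (which is exactly what accounts for the "$+1$"). One should also note that the argument uses nothing about digons or bigons beyond the convention that they count as cycles, so no case distinction is needed. It may be worth remarking that both bounds are tight — e.g. taking $D$ to be a disjoint union of $k-1$ digons together with an isolated vertex, or iterating a construction, shows equality can hold — but that is not required for the proposition as stated.
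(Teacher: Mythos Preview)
Your proof is correct and follows essentially the same pigeonhole argument as the paper: at most one color class can avoid the feedback vertex set $F$, since any two classes disjoint from $F$ would merge into an acyclic set, contradicting completeness. The only cosmetic difference is that the paper phrases it as a proof by contradiction assuming $\adi(D)\ge |F|+2$, whereas you count directly that $k-1$ classes must hit $F$.
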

\begin{proof}
We give the proof for the directed case, the undirected case is completely analogous. Denote by $F \subseteq V(D)$ a directed feedback vertex set of $D$ with minimal size. Assume towards a contradiction that $\adi(D) \ge |F|+2$. Then there is a complete acyclic partition $(V_1,\ldots,V_k)$ of $D$ with $k \ge |F|+2$ colors. Consequently, there are at least two colors $i,j \in \{1,\ldots,k\}$ which do not appear on any vertex of $F$. However, this implies that $V_i \cup V_j \subseteq V(D) \setminus F$ induces an acyclic subdigraph of $D$, contradicting the definition of a complete acyclic coloring.
\end{proof}
The given upper bounds are easily seen to be tight for acyclic digraphs, directed cycles of arbitrary length, and complete digraphs (obtained from a complete graph by replacing each edge by a digon) in the directed case, respectively for forests, cycles and multi-graphs obtained from complete graphs by replacing each simple edge by a bigon in the undirected case. It is a natural question whether there is also an inverse relationship between the parameters $\adi(D)$ and $\tau(D)$ respectively $\ava(G)$ and $\tau(G)$. This question is central to our paper and will be dealt with in Section \ref{main}.

Since digons and bigons are counted as (directed) cycles of length two, it is easily seen that both the adichromatic number and the achromatic number form a proper generalization of the achromatic numbers of graphs when allowing multiple edges.
\begin{observation} \label{achrom}
For a simple graph $G$, let $\overset{\leftrightarrow}{G}$ denote the bidirected graph where every edge of $G$ is replaced by a digon, and let $2G$ be the multi-graph obtained from $G$ by doubling the edges. Then we have the following equalities involving the achromatic number of $G$:
$$\Psi(G)=\adi(\overset{\leftrightarrow}{G})=\ava(2G).$$
\end{observation}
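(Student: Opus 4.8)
The plan is to show the two equalities $\Psi(G)=\adi(\overset{\leftrightarrow}{G})$ and $\Psi(G)=\ava(2G)$ by establishing, in each case, a bijection between complete colorings of $G$ (in the ordinary achromatic sense) and complete acyclic/arboreal colorings of the corresponding multi-structure. The key observation driving everything is that in $\overset{\leftrightarrow}{G}$ a digon is a directed $2$-cycle, so a set $S\subseteq V$ induces an acyclic subdigraph of $\overset{\leftrightarrow}{G}$ if and only if $S$ is independent in $G$; and likewise in $2G$ a bigon is a $2$-cycle, so $S$ induces a forest in $2G$ if and only if $S$ is independent in $G$. I would state this equivalence explicitly as the first step, since it immediately matches ''every color class acyclic/forest'' with ''every color class independent,'' i.e.\ with ''proper coloring of $G$.''

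Second, I would translate the completeness condition. For a partition $(V_1,\dots,V_k)$ with every class independent in $G$, merging classes $i$ and $j$ creates a monochromatic directed cycle in $\overset{\leftrightarrow}{G}$ (resp.\ a cycle in $2G$) if and only if $V_i\cup V_j$ is not independent in $G$, which by independence of each class means there is an edge of $G$ between $V_i$ and $V_j$. This is exactly the completeness requirement for a complete coloring of $G$. Hence a partition of $V(G)$ is a complete acyclic coloring of $\overset{\leftrightarrow}{G}$ (resp.\ a complete arboreal coloring of $2G$) precisely when it is a complete coloring of $G$. Taking the maximum number of classes over all such partitions yields the claimed equalities.

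The argument is short and the only thing to be slightly careful about is the ''only if'' direction of the $2$-cycle/forest and $2$-cycle/acyclic correspondence: I should note that a cycle in $2G$ either uses a bigon (two parallel edges between adjacent vertices of $G$) or corresponds to a genuine cycle of $G$ of length $\geq 3$, and in either case it forces a non-independent color class once we have merged; conversely a single edge between $V_i$ and $V_j$ already gives a bigon inside $V_i\cup V_j$ after merging. The analogous remark for $\overset{\leftrightarrow}{G}$ is that any directed cycle projects to a closed walk in $G$, which contains either a repeated edge (a digon) or a genuine cycle, and again an edge between two classes yields a digon after merging. I do not expect any real obstacle here; the statement is essentially a definitional unwinding, and the main ''work'' is simply recording the two elementary equivalences above.
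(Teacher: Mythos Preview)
Your proposal is correct and follows essentially the same approach as the paper: the paper's proof is a one-sentence observation that a vertex set is acyclic in $\overset{\leftrightarrow}{G}$ if and only if it induces a forest in $2G$ if and only if it is independent in $G$, from which the equality of the three maxima is immediate. Your write-up simply unpacks this equivalence (and the corresponding completeness condition) in more detail.
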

\begin{proof}
The three parameters are each defined as the maximal size of a partition into independent respectively acyclic vertex sets of maximal size such that the union of any two partition classes is not independent respectively acyclic any more. The claim therefore follows from observing that a vertex set in $\overset{\leftrightarrow}{G}$ induces an acyclic subdigraph if and only if it induces a forest in $2G$ and if and only if it is independent in $G$.
\end{proof}

As independent sets form a special case of acyclic vertex sets in graphs, which again define acyclic vertex sets in any orientation of that graph, it is easily seen that for any digraph $D$ with underlying graph $G$, we have $\vec{\chi}(D) \leq \va(G) \leq \chi(G)$. It is therefore natural to ask whether similar relationships between the adichromatic number, a-vertex arboricity and achromatic number exist. 
The following presents a set of canonical graphs and digraphs with their $a$-coloring parameters, which show that in general, both $\adi(D)$ and $\ava(G)$ are not upper bounded in terms of $\Psi(G)$, where $G$ is the underlying graph of $D$. Note that vice-versa, $\Psi(G)$ cannot be upper bounded in terms of $\adi(D)$ or $\ava(G)$, as it is already unbounded on matchings.
\begin{proposition}\label{examples}
For any $m, n \in \mathbb{N}$, $m, n \ge 1$ we have
\begin{enumerate}
\item $\ava(K_{m,n})=\min\{m,n\}$, while $\Psi(K_{m,n})=2$.
\item $\ava(K_n)=\lceil \frac{n}{2} \rceil$.
\item Let $D_{n}$ be the orientation of $K_{n,n}$ in which a perfect matching is directed from the first to the second class of the bipartition while all non-matching edges emanate from the second class. Then $\adi(D_n)=n$.
\item Let $G$ respectively $D$ be the vertex-disjoint union of $\binom{n}{2}$ cycles respectively directed cycles, $n \ge 2$. Then $\ava(G)=\adi(D)=n.$
\end{enumerate}
\end{proposition}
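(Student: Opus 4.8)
The plan is to handle the four items separately, but with a shared template for items 1--3: derive the upper bound from Proposition~\ref{FVupperbound} after computing the relevant feedback vertex number, and match it with an explicit complete coloring. Item 4 is the outlier: there $\tau = \binom n2$, so Proposition~\ref{FVupperbound} is hopelessly weak, and the bound $\leq n$ must instead come from a counting argument pairing colors with ``witness'' cycles.

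For item~1 (assume $m\leq n$) I would first note $\tau(K_{m,n})=m-1$: deleting all but one vertex of the smaller side leaves a star, while any smaller deletion leaves a $K_{2,2}$; hence $\ava(K_{m,n})\leq m$. For the lower bound, give the small side $\{a_1,\dots,a_m\}$ $m$ distinct colors and distribute the large side into \emph{nonempty} sets $B_1,\dots,B_m$ (possible since $n\geq m$), setting $V_i=\{a_i\}\cup B_i$; each $V_i$ is a star, and merging $V_i,V_j$ yields a $4$-cycle through $a_i,a_j$ and two vertices of $B_i\cup B_j$ since $|B_i|+|B_j|\geq 2$. Finally $\Psi(K_{m,n})=2$ because every independent set of $K_{m,n}$ lies inside one side, so with three color classes two of them would sit on the same side and have independent union. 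For item~2, the structural observation is that a vertex subset of $K_n$ induces a forest iff it has at most $2$ vertices, and that completeness forces $|V_i|+|V_j|\geq 3$ for $i\neq j$ (any three vertices of $K_n$ span a triangle); hence in any complete arboreal coloring at most one class is a singleton and all others are pairs, which simultaneously caps the number of classes at $\lceil n/2\rceil$ and is attained by partitioning $V(K_n)$ into pairs (plus one singleton if $n$ is odd). For item~3, write $a_i\to b_i$ for the matching arcs and $b_j\to a_i$ ($i\neq j$) for the rest; one checks that a vertex set $S$ is acyclic iff at most one index $i$ has $\{a_i,b_i\}\subseteq S$, because any directed cycle must alternate between the two sides and hence use at least two full matched pairs. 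Consequently $\tau(D_n)=n-1$ and $\adi(D_n)\leq n$, while the coloring $V_i=\{a_i,b_i\}$ has each class acyclic and every merge $V_i\cup V_j$ containing the $4$-cycle on $\{a_i,b_i,a_j,b_j\}$; thus $\adi(D_n)=n$.

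For item~4 the lower bound is again a construction: index the $\binom n2$ components by the $\binom n2$ pairs $\{i,j\}\subseteq\{1,\dots,n\}$, and on the $(i,j)$-th cycle use only colors $i$ and $j$, both appearing (possible since every cycle has at least two vertices). Then within each component every color class is a proper subset of a cycle, hence a forest (resp.\ acyclic), and since the components are vertex-disjoint the whole class is a forest (resp.\ acyclic); moreover the $(i,j)$-th cycle is contained in $V_i\cup V_j$, so merging $i$ and $j$ creates a monochromatic cycle, giving a complete coloring with $n$ colors. For the upper bound, in any complete coloring each component cycle $C$ carries at least two colors (properness), and if merging colors $i,j$ produces a monochromatic cycle then some component $C$ has $V(C)\subseteq V_i\cup V_j$, forcing $C$ to use exactly colors $i$ and $j$; hence each component serves as witness for at most one pair of colors, so $\binom r2\leq\binom n2$ and $r\leq n$.

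The only genuinely non-routine step is this last counting bound for item~4, where the feedback-vertex-set inequality is useless and one must argue via witness cycles; everything else reduces to computing $\tau$ and recording the ``each color class is tiny / is essentially one matched pair'' observations before plugging into Proposition~\ref{FVupperbound} and writing down the canonical colorings.
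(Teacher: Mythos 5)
Your proof is correct and follows essentially the same route as the paper: explicit colorings for the lower bounds, the feedback-vertex-set bound of Proposition~\ref{FVupperbound} for the upper bounds in items 1--3, and the witness-cycle counting argument $\binom{r}{2}\leq\binom{n}{2}$ for item 4. The only (harmless) deviations are that you color all of $K_{m,n}$ directly instead of invoking induced-subgraph monotonicity via $K_{m,m}$, and you spell out the characterization of acyclic sets in $D_n$ rather than just exhibiting the feedback vertex set.
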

\begin{proof}
\noindent
\begin{enumerate}
\item Assume that $m \le n$. We first observe that $\ava(K_{m,m}) \ge m$: Consider a perfect matching and assign $m$ different colors to the pairs of matched vertices. Now the union of any two color classes produces a cycle of length $4$. Because $K_{m,m}$ is an induced subgraph of $K_{m,n}$, it is not hard to see that $\ava(K_{m,n}) \ge \ava(K_{m,m}) \ge m=\min\{m,n\}$ (this will be noted later in Corollary \ref{InducedMonotonicity}). On the other hand, removing all but $1$ vertex from the smaller partite class of $K_{m,n}$ shows that $\tau(K_{m,n}) \leq \min\{m,n\}-1$. The claim follows from Proposition \ref{FVupperbound}. 
\item  In any complete arboreal coloring of $K_n$, each color class has size at most $2$ and there is at most one singleton-color class (otherwise two singleton-colors could be merged). We therefore have $\ava(K_n) \leq \frac{n}{2}$ if $n$ is even and $\ava(K_n) \leq \frac{n}{2}+1$ if $n$ is odd. 

If $n$ is even, $K_n$ contains $K_{n/2,n/2}$ as an induced subgraph and so $\ava(K_n) \ge \ava(K_{n/2,n/2})=\frac{n}{2}$. If $n$ is odd, we can use $\frac{n-1}{2}$ colors to color paired vertices of $K_{n-1}$ and then add an extra color for the remaining vertex. This yields a complete arboreal coloring with $\frac{n}{2}+1$ colors. This verifies the claim in both the even and odd case.
\item To verify $\adi(D_n) \ge n$, we observe that by taking the two vertices of each matching edge as a color class we define a partition into $n$ acyclic subdigraphs such that the union of any two creates a directed cycle of length $4$. Deleting all vertices of one partite class except one shows that $\tau(D_n) \leq n-1$. Again, the claim follows using Proposition \ref{FVupperbound}.
\item In any complete arboreal respectively acyclic coloring of $G$ respectively $D$, we must have a cycle (respectively directed cycle) in the union of any two color classes, and so if we use $k$ colors, we have at least $\binom{k}{2}$ distinct cycles. This proves $\ava(G),\adi(D) \leq n$. On the other hand, assigning to each pair $\{i,j\} \in \binom{[n]}{2}$ a different cycle and coloring this cycle using only $i$ and $j$ defines a complete arboreal (respectively acyclic) coloring of $G$ (respectively $D$) and proves that $\ava(G), \adi(D) \ge n$.
\end{enumerate}
\end{proof}

Recently, the concept of the {diachromatic number} was introduced in~\cite{diachromatic}. Given a digraph $D$, the \emph{diachromatic number} $\dac(D)$ of $D$ is defined as the maximum number $k$ of colors that can be used in a vertex-coloring of $D$ with acyclic partition classes $V_1,\ldots,V_k$, such that for any $(i,j) \in [k]^2$, $i \neq j$, there exists at least one arc of $D$ with head in $V_i$ and tail in $V_j$. 

In a complete acyclic coloring of a digraph, the union of any two color classes contains the vertex set of a directed cycle, and therefore arcs in both directions between the two acyclic color classes must exist. Hence, any complete acyclic coloring in our sense also defines a complete coloring as defined in \cite{diachromatic}. We therefore have
\begin{observation}\label{adileqdia}
For any digraph $D$, it holds that $\adi(D) \leq \dac(D).$
\end{observation}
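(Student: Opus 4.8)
The plan is to show that every complete acyclic coloring of $D$ is, in particular, a complete coloring in the sense used to define $\dac$, so that the largest number of colors attainable by the former cannot exceed the largest attainable by the latter. Concretely, I would fix an optimal complete acyclic coloring $(V_1,\ldots,V_k)$ of $D$ with $k=\adi(D)$, and then argue that this same partition satisfies the defining requirements of a diachromatic coloring, which immediately gives $\dac(D)\ge k=\adi(D)$.

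The two definitions already agree on the first requirement: each class $V_i$ induces an acyclic subdigraph. The key step is therefore to verify the cross-arc condition from the definition of $\dac$. Let $(i,j)\in[k]^2$ with $i\neq j$. By the definition of a complete acyclic coloring, the merge $V_i\cup V_j$ contains the vertex set of a directed cycle $C$ of $D$. Since $D[V_i]$ and $D[V_j]$ are both acyclic, $C$ can be contained in neither $V_i$ nor $V_j$, so $C$ meets both classes. Traversing $C$ once around, one passes at least once from a vertex of $V_j$ to a vertex of $V_i$; the arc realizing this transition has its tail in $V_j$ and its head in $V_i$, which is exactly the arc demanded by the definition of $\dac$ for the ordered pair $(i,j)$. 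As $(i,j)$ was arbitrary, the partition $(V_1,\ldots,V_k)$ is a valid coloring in the sense of $\dac$, so $\dac(D)\ge k=\adi(D)$.

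I do not expect a genuine obstacle here; the statement is essentially a reformulation of the sentence preceding it in the text. The only point that warrants a line of care is the orientation bookkeeping: one must note that a directed cycle straddling two color classes produces cross-arcs in \emph{both} directions (head in $V_i$/tail in $V_j$ and vice versa), which is immediate because a closed directed walk that ever leaves a given class must also, at some later step, re-enter it.
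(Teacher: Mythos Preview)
Your proposal is correct and follows exactly the paper's approach: the paper's justification (given in the sentence immediately preceding the observation) is that the directed cycle in $V_i\cup V_j$ forces arcs in both directions between the two acyclic classes, so every complete acyclic coloring is a valid diachromatic coloring. Your write-up spells out this one-line argument with the appropriate orientation bookkeeping, and nothing more is needed.
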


In general however, the above estimate is far from being tight. For example, the diachromatic number is not bounded for directed acyclic digraphs, which always have adichromatic number equal to $1$. Together with  \cite[Corollary 12]{diachromatic} we get:
\begin{proposition}\label{farfromtight}
Let $T_n$ denote the transitive tournament on $n$ vertices. It holds that $$\adi(T_n)=1, \dac(T_n)=\left\lceil \frac{n}{2} \right\rceil.$$
\end{proposition}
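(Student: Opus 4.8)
The statement has two parts. The claim $\adi(T_n)=1$ is immediate from the observations already established: a transitive tournament is acyclic (it is a DAG), hence has no directed cycle at all, so in any vertex-partition no merge of two classes can create a directed cycle; the only way a coloring can be complete is therefore to use a single color. The cited fact just before the proposition, that the only digraphs with adichromatic number $1$ are DAGs, makes this a one-line argument. So essentially all the content is in the second equality $\dac(T_n)=\lceil n/2\rceil$, which is attributed to \cite[Corollary 12]{diachromatic}; thus the "proof" is really just the assembly of a known result with a trivial observation, and I would present it that way.

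For completeness I would still sketch why $\dac(T_n)=\lceil n/2\rceil$ in case the reader wants the idea. Fix the transitive order $v_1 \to v_2 \to \cdots \to v_n$ (all arcs point from lower to higher index). In any acyclic-partition coloring, each color class is automatically acyclic, so the only constraint is the diachromatic one: for every ordered pair $(i,j)$ of distinct colors there must be an arc with head in $V_i$ and tail in $V_j$. Since all arcs go "forward", having an arc with tail in $V_j$ and head in $V_i$ means some vertex of $V_j$ precedes some vertex of $V_i$; requiring this for \emph{both} orders $(i,j)$ and $(j,i)$ means $V_i$ and $V_j$ must "interleave", i.e. neither class can lie entirely before the other. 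This is the key obstruction: a family of $k$ subsets of a linearly ordered set, pairwise interleaving in this sense, forces $n \geq 2k-1$ — each new class beyond the first needs a vertex strictly before the minimum of some earlier-seen vertex and a vertex strictly after, and a careful accounting (or the argument in \cite{diachromatic}) yields exactly the bound $k \leq \lceil n/2\rceil$. For the matching construction, pair up $v_1$ with $v_n$, $v_2$ with $v_{n-1}$, and so on, giving each pair its own color (with a leftover singleton in the middle when $n$ is odd); then for colors $\{v_a, v_{n+1-a}\}$ and $\{v_b, v_{n+1-b}\}$ with $a<b$ one checks both required arcs are present, so this is a valid complete coloring in the sense of \cite{diachromatic} with $\lceil n/2\rceil$ colors.

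The main obstacle, to the (small) extent there is one, is purely expository: making sure the interleaving characterization is stated correctly and that the extremal count $k\le\lceil n/2\rceil$ is pinned down, including the parity case. Since the paper explicitly cites \cite[Corollary 12]{diachromatic} for this value, I would not reprove it in full — I would state $\adi(T_n)=1$ with the DAG observation, quote the diachromatic value from the reference, and at most include the two-line construction and the interleaving remark as motivation. Everything else follows from Observation~\ref{adileqdia}, which here is wildly non-tight and is precisely the point the proposition is illustrating.
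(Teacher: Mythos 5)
Your proposal matches the paper's treatment exactly: the paper derives $\adi(T_n)=1$ from the observation that DAGs are precisely the digraphs with adichromatic number $1$, and simply cites \cite[Corollary 12]{diachromatic} for $\dac(T_n)=\lceil n/2\rceil$, offering no further proof. Your supplementary sketch of the diachromatic bound (the pairing construction and interleaving obstruction) is a correct but optional addition beyond what the paper provides.
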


%
%
%

For other notions of complete colorings, so-called \emph{interpolation theorems} have been shown. See~\cite[Theorem 22]{diachromatic} and~\cite{achromatic} for the diachromatic and achromatic versions, respectively. Here we extend these results to the adichromatic number and the a-vertex arboricity.

%

\begin{theorem}\label{interpolation}
\noindent
\begin{enumerate}
\item Let $D$ be a digraph and let $\ell \in \mathbb{N}$. Then there exists a complete acyclic coloring of $D$ using exactly $\ell$ colors if and only if $\vec{\chi}(D) \leq \ell \leq \adi(D)$.
\item Let $G$ be a graph and let $\ell \in \mathbb{N}$. Then there exists a complete arboreal coloring of $G$ using exactly $\ell$ colors if and only if $\va(G) \leq \ell \leq \ava(G)$.
\end{enumerate}
\end{theorem}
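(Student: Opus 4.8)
The plan is to prove both parts by the same two-step argument, since directed cycles in a digraph and cycles in a graph play symmetric roles here; I will phrase it for the directed case. The forward direction is immediate: if a complete acyclic coloring with exactly $\ell$ colors exists, then $\ell \geq \vec{\chi}(D)$ because no proper acyclic coloring can use fewer than $\vec{\chi}(D)$ colors, and $\ell \leq \adi(D)$ by the very definition of $\adi(D)$ as the maximum number of colors in such a coloring. So the content is in the reverse direction: given $\vec{\chi}(D) \leq \ell \leq \adi(D)$, produce a complete acyclic coloring with exactly $\ell$ colors. The two endpoints $\ell = \vec{\chi}(D)$ and $\ell = \adi(D)$ are realized by definition (using Observation~\ref{divsadi} for the lower endpoint, where any optimal proper acyclic coloring is automatically complete), so it suffices to show that if there is a complete acyclic coloring with $k$ colors and $k > \vec{\chi}(D)$, then there is one with $k-1$ colors; iterating this down from $\ell+1,\ell+2,\dots$ starting at any complete coloring with more than $\ell$ colors (e.g.\ an $\adi(D)$-coloring) yields the desired $\ell$-coloring.

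For the descent step, take a complete acyclic coloring $(V_1,\dots,V_k)$ with $k > \vec{\chi}(D)$. Since $k$ exceeds the dichromatic number, the partition $(V_1,\dots,V_k)$ cannot itself be an optimal proper coloring in the sense of being non-improvable: more precisely, I want to argue that some color class can be eliminated by redistributing its vertices one at a time into other classes without creating a monochromatic directed cycle, after which the resulting $(k-1)$-coloring is still proper, and then — if it happens to fail completeness — it can be repaired. The cleanest route is: pick the color class $V_k$; I claim its vertices can be moved, one by one, into the remaining classes so that acyclicity is preserved throughout. If at some point a vertex $v \in V_k$ cannot be added to any of $V_1,\dots,V_{k-1}$ without closing a monochromatic directed cycle, then $v$ together with the classes $V_1,\dots,V_{k-1}$ certifies that $\{V_1,\dots,V_{k-1},\{v\}\}$ is a proper acyclic coloring which is non-improvable by merging $\{v\}$ with anything — but this does not immediately give a contradiction with $k > \vec\chi$. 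So instead I will use the hypothesis $k>\vec\chi(D)$ differently: fix an optimal proper acyclic coloring $(W_1,\dots,W_m)$ with $m = \vec\chi(D) < k$; starting from $(V_1,\dots,V_k)$, repeatedly find two classes whose union is acyclic and merge them — this is possible as long as the current coloring has more than $m$ classes, because otherwise the current coloring would be a complete acyclic coloring with fewer than $\vec\chi(D)\!+\!1$... wait, that is not forced either. The right statement is the standard one: as long as the number of classes strictly exceeds $\vec{\chi}(D)$, the current partition is \emph{not} already a proper coloring realizing the optimum, hence it is not complete-minimal, hence \emph{some} pair of classes merges to an acyclic set; merging that pair decreases the count by one. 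Thus from any acyclic partition with $>\vec\chi(D)$ parts we can always merge down, and in particular we can merge down from $k$ to exactly $\ell$. The only thing to check is that the partition we stop at (with exactly $\ell$ parts) is complete — and it is, because at the moment we have $\ell$ parts either $\ell = \vec\chi(D)$ (optimal, hence complete) or $\ell > \vec\chi(D)$ and the merging process only stops when no further merge is possible, i.e.\ every pair of classes has a monochromatic directed cycle in its union, which is exactly completeness.

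The main obstacle is justifying the key claim that \emph{any} acyclic partition with more than $\vec{\chi}(D)$ classes admits a merge of some two classes into an acyclic set — equivalently, that a \emph{non-mergeable} (complete) acyclic partition must have at least... no: a complete acyclic coloring can have as few as $\vec\chi(D)$ classes and as many as $\adi(D)$, so "complete" does not pin down the count. What actually makes the descent work is that the \emph{merging procedure} applied to a non-complete acyclic partition strictly reduces the count, and a partition with exactly $\vec\chi(D)$ classes that is optimal is automatically complete; so I should run the procedure until it gets stuck, note that it cannot get stuck above $\ell$ is false — rather, I run it and it produces complete colorings of every size it passes through only at the stopping point. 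The correct and careful formulation, which I will write out, is the following loop: given a complete acyclic $k$-coloring with $k>\ell$, I exhibit a complete acyclic $(k{-}1)$-coloring by (i) deleting an arbitrary vertex $v$, (ii) applying induction / the endpoint cases to the smaller digraph $D-v$ to get a complete acyclic $(k{-}1)$-coloring of $D-v$ — valid since $\vec\chi(D-v)\le k-1\le\adi(D-v)$ needs checking — and (iii) inserting $v$ back into a class where it fits, or if it fits nowhere, observing $v$ forms its own class giving $k$ again, a case one rules out by choosing $v$ in a class of size $\ge 2$ of the original $k$-coloring (which exists since $k>\vec\chi(D)\ge$ ... $|V|>\vec\chi(D)$ when $k>\vec\chi$, actually $k\le |V|$ always and equality forces all singletons which forces $\vec\chi = $ number of vertices with an arc, a corner case handled directly). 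I expect steps (ii) and (iii) — the bookkeeping that the recursion stays within the interval $[\vec\chi,\adi]$ on $D-v$, and that $v$ can be reinserted so as to keep the $(k{-}1)$-coloring complete — to be the delicate part, and the undirected case will go through verbatim with "directed cycle" replaced by "cycle" and $\vec\chi,\adi$ replaced by $\va,\ava$.
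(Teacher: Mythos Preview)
Your proposal correctly handles the forward direction but does not close the reverse direction. You try two strategies and yourself diagnose the failure of the first: greedy merging started from an $\adi(D)$-coloring cannot move at all (that coloring is already complete), and started from any other acyclic partition it may halt at \emph{any} complete coloring, so there is no mechanism forcing it to stop at exactly $\ell$ colors.

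Your second strategy (induction on $|V(D)|$ via vertex deletion) has a real gap in step~(iii). Suppose induction hands you a complete acyclic $(k{-}1)$-coloring $(W_1,\dots,W_{k-1})$ of $D-v$. If $v$ can be inserted into some $W_i$ without creating a monochromatic directed cycle, completeness is indeed preserved and you are done. But if $v$ fits in no $W_i$, you only recover a complete $k$-coloring of $D$, not a $(k{-}1)$-coloring. You claim this bad case is ``ruled out by choosing $v$ in a class of size $\ge 2$ of the original $k$-coloring'', but the original $k$-coloring has nothing to do with the $(k{-}1)$-coloring that the induction hypothesis produces on $D-v$; that new coloring may be entirely unrelated, and there is no reason $\{v\}\cup W_i$ should be acyclic for some $i$. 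This step does not go through, and it is not clear how to patch it without a new idea. (Step~(ii) also needs the bound $\adi(D-v)\ge\adi(D)-1$, which you do not establish, though that one is provable.)

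The paper supplies the missing idea via an auxiliary parameter. For a partition $\mathcal{P}$ of $V(D)$ into acyclic sets, let $\vec\chi_{\mathcal{P}}(D)$ be the minimum number of colors in an acyclic coloring that is constant on each block of $\mathcal{P}$. Merging two blocks whose union is acyclic changes $\vec\chi_{\mathcal{P}}(D)$ by at most~$1$ (Lemma~\ref{change}). Now take a chain of partitions from singletons $\mathcal{P}_0$ (where $\vec\chi_{\mathcal{P}_0}(D)=\vec\chi(D)$) up to an optimal complete coloring $\mathcal{P}$ (where $\vec\chi_{\mathcal{P}}(D)=\adi(D)$), each step a single merge. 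By the discrete intermediate value property some $\mathcal{P}_i$ satisfies $\vec\chi_{\mathcal{P}_i}(D)=\ell$, and any optimal $\mathcal{P}_i$-compatible coloring is automatically complete (else a merge would improve it). The device that makes this work---and that both your merging and deletion arguments lack---is tracking a \emph{minimum} subject to a moving constraint, rather than the size of one particular coloring.
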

To prove the theorem, we consider generalizations of the dichromatic number respectively the vertex arboricity, where we want to minimize the number of colors in an acyclic coloring with the additional restriction that certain vertices must be colored the same.
\begin{definition} \label{partitions}
\noindent
\begin{enumerate}
\item Let $D$ be a digraph, and let $\mathcal{P}=\{P_1,\ldots,P_t\}$ be a partition of the vertex set such that $D[P_i]$ is acyclic for all $i \in [t]$. Then we define the \emph{$\mathcal{P}$-dichromatic number} of $D$, denoted by $\vec{\chi}_{\mathcal{P}}(D)$, to be the least number of colors required in a proper digraph coloring of $D$ (without monochromatic directed cycles) such that for any $i$, the vertices in $P_i$ receive the same color.
\item Let $G$ be a graph, and let $\mathcal{P}=\{P_1,\ldots,P_t\}$ be a partition of the vertex set such that $G[P_i]$ is a forest for all $i \in [t]$. Then we define the \emph{$\mathcal{P}$-vertex arboricity} of $G$, denoted by $\va_{\mathcal{P}}(G)$, to be the least number of colors required in a proper arboreal coloring of $G$ (without monochromatic cycles) such that for any $i$, the vertices in $P_i$ receive the same color.
\end{enumerate}
\end{definition}
We prepare the proof with the following simple lemma.
\begin{lemma} \label{change}
\noindent
\begin{enumerate}
\item Let $D$ be a digraph with a partition $\mathcal{P}=\{P_1,\ldots,P_t\}$ into acyclic vertex sets. Assume that also $D[P_1 \cup P_2]$ is acyclic, and let $\mathcal{Q}:=\{P_1 \cup P_2,P_3, \ldots, P_t\}$. Then we have
$$\vec{\chi}_\mathcal{P}(D) \leq \vec{\chi}_{\mathcal{Q}}(D) \leq \vec{\chi}_\mathcal{P}(D)+1.$$
\item Let $G$ be a graph with a partition $\mathcal{P}=\{P_1,\ldots,P_t\}$ into vertex sets inducing forests. Assume that also $G[P_1 \cup P_2]$ is a forest, and let $\mathcal{Q}:=\{P_1 \cup P_2,P_3, \ldots, P_t\}$. Then we have
$$\va_\mathcal{P}(G) \leq \va_{\mathcal{Q}}(G) \leq \va_\mathcal{P}(G)+1.$$
\end{enumerate}
\end{lemma}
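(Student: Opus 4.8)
The plan is to prove the two inequalities by transferring colorings between the two partitions, treating the directed case in detail; the undirected case then follows by the verbatim substitution of ``forest'' for ``acyclic subdigraph'' and ``cycle'' for ``directed cycle''.

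For the lower bound $\vec{\chi}_\mathcal{P}(D) \le \vec{\chi}_{\mathcal{Q}}(D)$, the key observation is that $\mathcal{P}$ refines $\mathcal{Q}$: every block of $\mathcal{Q}$ is a union of blocks of $\mathcal{P}$. Hence any proper acyclic coloring of $D$ that is constant on each block of $\mathcal{Q}$ is in particular constant on each block of $\mathcal{P}$, so it is admissible in the definition of $\vec{\chi}_\mathcal{P}$. Applying this to an optimal $\mathcal{Q}$-coloring gives the inequality.

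For the upper bound $\vec{\chi}_{\mathcal{Q}}(D) \le \vec{\chi}_\mathcal{P}(D)+1$, I would start from an optimal proper acyclic coloring $c$ that is constant on each block of $\mathcal{P}$ and uses $k := \vec{\chi}_\mathcal{P}(D)$ colors, and modify it. The only obstruction to $c$ being a valid $\mathcal{Q}$-coloring is that it may assign different colors to $P_1$ and $P_2$. To repair this, introduce a fresh color $0 \notin \{1,\dots,k\}$ and recolor every vertex of $P_1 \cup P_2$ with $0$, leaving all other vertices untouched; call the result $c'$. Then $c'$ is constant on $P_1 \cup P_2$ and still constant on each $P_i$ with $i \ge 3$, so it is admissible for $\mathcal{Q}$, and it uses at most $k+1$ colors. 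It remains to verify that $c'$ is a proper acyclic coloring: the class $c'^{-1}(0) = P_1 \cup P_2$ induces an acyclic subdigraph by hypothesis, and for each $a \in \{1,\dots,k\}$ the class $c'^{-1}(a) = c^{-1}(a) \setminus (P_1 \cup P_2)$ is a subset of the acyclic set $c^{-1}(a)$, hence acyclic. This yields the claimed bound.

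I do not expect a genuine obstacle here; the one point requiring care is that the new color $0$ must be distinct from every color used by $c$, so that the recoloring does not inadvertently enlarge an existing color class and create a monochromatic (directed) cycle. As noted, the undirected statement is obtained by running the same argument with ``forest'' in place of ``acyclic subdigraph'' throughout.
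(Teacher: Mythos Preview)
Your proof is correct and follows essentially the same approach as the paper: both arguments use that any $\mathcal{Q}$-compatible coloring is $\mathcal{P}$-compatible for the lower bound, and both recolor $P_1\cup P_2$ with a single fresh color to obtain a $\mathcal{Q}$-compatible coloring with at most one extra color for the upper bound. Your write-up is in fact slightly more careful in verifying that each resulting color class is acyclic.
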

\begin{proof}
We give a proof for the directed case, the undirected case is analogous. Clearly, any digraph coloring of $D$ which is compatible with $\mathcal{Q}$ is also compatible with $\mathcal{P}$. This directly yields $\vec{\chi}_\mathcal{P}(D) \leq \vec{\chi}_{\mathcal{Q}}(D)$. 

On the other hand, let $c:V(D) \rightarrow [\ell]$ be a digraph coloring of $D$ using $\ell=\vec{\chi}_\mathcal{P}(D)$ colors such that the vertices in $P_i$ are colored the same, for all $i \in [t]$. It is now easily seen that coloring all vertices in $P_3 \cup P_4 \cup \dots \cup P_t$ as in $c$ and giving color $\ell+1$ to all vertices in $P_1 \cup P_2$ defines a proper digraph coloring of $D$ which is compatible with $\mathcal{Q}$ and uses at most $\ell+1$ colors. This proves the inequality $\vec{\chi}_{\mathcal{Q}}(D) \leq \vec{\chi}_\mathcal{P}(D)+1$.
\end{proof}
\begin{proof}[Proof of Theorem \ref{interpolation}.]
We prove the first part of the Theorem, the proof of the second is completely analogous. 

First of all, the conditions on $\ell$ are necessary: Any complete acyclic coloring also is a proper digraph coloring, so it uses at least $\vec{\chi}(D)$ colors, and by definition, it cannot use more than $\adi(D)$. 

So let now $\ell \in \{\vec{\chi}(D),\ldots,\adi(D)\}$ be given. Define $\mathcal{P}_0:=\{\{v\}|v \in V(D)\}$ to be the partition of $V(D)$ into singletons, and let $\mathcal{P}$ denote the partition of $V(D)$ into the $\adi(D)$ color classes corresponding to a complete acyclic coloring of $D$ with the maximum number of colors. Looking at Definition \ref{partitions}, it is readily verified that $\vec{\chi}_{\mathcal{P}_0}(D)=\vec{\chi}(D)$ and $\vec{\chi}_{\mathcal{P}}(D)=\adi(D)$ (for the latter note that different partition classes must be colored differently, as their union is cyclic). Now consider a sequence $\mathcal{P}_0,\mathcal{P}_1,\mathcal{P}_2,\ldots,\mathcal{P}_r=\mathcal{P}$ consisting of partitions of $V(D)$ into acyclic vertex sets such that for any $i=0,1,\ldots,r-1$, $\mathcal{P}_{i+1}$ is obtained from $\mathcal{P}_i$ by merging a pair of partition classes. The existence is easily seen by successively splitting partition classes, starting from the back with $\mathcal{P}$. Applying Lemma \ref{change} we get that $\vec{\chi}_{\mathcal{P}_i}(D) \leq \vec{\chi}_{\mathcal{P}_{i+1}}(D) \leq \vec{\chi}_{\mathcal{P}_i}(D)+1$ for all $i \in [r-1]$. It follows from this and from $\vec{\chi}_{\mathcal{P}_0}(D) \leq \ell \leq \vec{\chi}_{\mathcal{P}_r}(D)$ that there exists some $i \in [r]$ such that $\ell=\vec{\chi}_{\mathcal{P}_i}(D)$. Let now $c:V(D) \rightarrow [\ell]$ denote an optimal digraph coloring compatible with $\mathcal{P}_i$ which uses the fewest number ($\ell$) of colors. This coloring must be a complete acyclic coloring: If the union of two color classes was acyclic, we could improve the number of colors used by merging these color classes, still keeping it compatible with $\mathcal{P}_i$. As we have found a complete acyclic coloring using exactly $\ell$ colors, this verifies the claim.
\end{proof}

\subsection{Behavior with respect to graph operations}\label{subsec:operations}

Most standard coloring parameters such as the chromatic number are monotone under subgraphs. This is not the case for the adichromatic number and a-vertex arboricity in general (consider a bidirected $C_4$ in the directed case and the multi-graph obtained from $C_4$ by doubling all edges for small examples, or alternatively the digraphs described in Proposition \ref{nobound} in the next section), we can establish a monotonicity under induced subgraphs.
\begin{lemma}
Let $D$ be a digraph and $G$ a graph.
\begin{itemize}
\item For any $v \in V(D)$, we have $\adi(D)-1 \leq \adi(D-v) \leq \adi(D)$.
\item For any $v \in V(G)$, we have $\ava(G)-1 \leq \ava(G-v) \leq \ava(G)$.
\end{itemize}
\end{lemma}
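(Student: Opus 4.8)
The plan is to prove the two statements in parallel, since the arguments are structurally identical; I will write it out for the directed case and remark that the undirected case follows by replacing ``acyclic subdigraph'' by ``forest'' and ``directed cycle'' by ``cycle'' throughout. There are two inequalities to establish for each statement: an upper bound $\adi(D-v)\leq \adi(D)$ and a lower bound $\adi(D-v)\geq \adi(D)-1$.

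For the upper bound, the idea is that a complete acyclic coloring of $D-v$ can be extended to a complete acyclic coloring of $D$ using at most one more color, which combined with the interpolation theorem (Theorem~\ref{interpolation}) gives what we want. Concretely, take an optimal complete acyclic coloring $(V_1,\dots,V_k)$ of $D-v$ with $k=\adi(D-v)$. Put $v$ into a brand new color class $V_{k+1}=\{v\}$; this is acyclic, and merging any two of the old classes $V_i,V_j$ still creates a directed cycle (that cycle lived in $D-v$ already and is untouched). So we obtain a coloring of $D$ in which every pair among $V_1,\dots,V_k$ is ``complete'', but the pairs involving $V_{k+1}$ might not be. This coloring has $\vec\chi(D)\leq \adi(D-v)+1$ by Observation~\ref{divsadi} on the number of colors, hmm — more carefully: the coloring just built uses $k+1$ colors and is a proper acyclic coloring, hence $\vec\chi(D)\leq k+1$; and any proper acyclic coloring can be greedily merged down to a complete one, so there is a complete acyclic coloring of $D$ using some number $\ell$ of colors with $\ell\geq$ the number of ``locally un-mergeable'' classes, but this last point needs care. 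The clean route is: the built coloring can be turned into a complete acyclic coloring of $D$ by repeatedly merging mergeable pairs; since the $\binom{k}{2}$ directed cycles witnessing completeness among $V_1,\dots,V_k$ survive any merge that only ever touches $V_{k+1}$, one argues that at least $k$ colors remain, so $\adi(D)\geq k=\adi(D-v)$. Actually, the slickest formulation uses interpolation directly: we have $\vec\chi(D)\le k+1$ and we exhibit a complete acyclic coloring with exactly $k$ or $k+1$ colors, so $\adi(D)\ge k$.

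For the lower bound $\adi(D)-1\leq \adi(D-v)$: take an optimal complete acyclic coloring $(W_1,\dots,W_m)$ of $D$ with $m=\adi(D)$, and delete $v$. This removal destroys the color class containing $v$ only partially, but more importantly it might destroy some of the witnessing directed cycles. However, for each pair $i,j$ the class $W_i\cup W_j$ loses only the vertex $v$ (if $v$ lies in one of them). Restrict the coloring to $V(D-v)$: we get a coloring with at most $m$ classes (possibly $m-1$ if $\{v\}$ was its own class) in which the union of any two classes not involving the color of $v$ still contains its witnessing directed cycle. Now greedily merge mergeable pairs to reach a complete acyclic coloring of $D-v$; at most one color (the one that was ``near'' $v$) can be absorbed, so we end with at least $m-1$ colors, giving $\adi(D-v)\geq \adi(D)-1$.

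The main obstacle, and the step I would be most careful about, is the merging argument in both directions: asserting ``greedily merge to get a complete coloring without losing too many colors'' requires noting precisely which witnessing cycles are preserved under which merges. The key observation making it work is that a directed cycle contained in $W_i\cup W_j$ remains a monochromatic directed cycle after any merge that keeps the colors of $V(W_i)\cup V(W_j)$ distinct from nothing — i.e., it is only destroyed if colors $i$ and $j$ themselves get identified, or if the cycle passed through $v$. So in the upper-bound direction, since only $V_{k+1}$ is ever merged away, the $\binom{k}{2}$ cycles among $V_1,\dots,V_k$ all survive, forcing $\geq k$ colors to remain; in the lower-bound direction, only the single color class meeting $v$ is ``tainted'', so at most one merge step can be forced, and the remaining $\geq m-1$ classes stay pairwise un-mergeable. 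I expect this bookkeeping to be the only non-routine part; everything else is immediate.
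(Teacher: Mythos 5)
Your proposal is correct and follows essentially the same route as the paper: for $\adi(D-v)\leq\adi(D)$ the paper likewise either absorbs $v$ into an existing class or gives it a fresh color (your ``add a singleton class, then merge if possible'' is the same case distinction), and for $\adi(D)-1\leq\adi(D-v)$ it likewise keeps the $\adi(D)-1$ classes avoiding $v$ intact and greedily merges the rest, noting that those classes remain pairwise un-mergeable. The bookkeeping you flag as the delicate point is exactly the observation the paper relies on, and your version of it is sound.
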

\begin{proof}
We prove the claim for the directed case, the undirected case is analogous. To prove that $\adi(D-v) \leq \adi(D)$, consider an optimal complete acyclic coloring of $D-v$ with color classes $V_1,\ldots,V_l$, $\ell:=\adi(D-v)$. If there is a color $i \in [\ell]$ such that $V_i \cup \{v\}$ induces an acyclic subdigraph, we can join $v$ to this color class in order to obtain a complete acyclic coloring of $D$ with $\ell$ colors. If on the other hand $V_i \cup \{v\}$ contains a directed cycle for every $i \in [\ell]$, we can give $v$ the new color $\ell+1$ and see that this defines a complete acyclic coloring of $D$ using $\ell+1$ colors. This implies that $\adi(D) \ge \ell=\adi(D-v)$ in every case. 

For the second inequality, we have to prove that $\adi(D)-1 \leq \adi(D-v)$. To see this, consider a complete acyclic coloring of $D$ using $r:=\adi(D)$ colors. There are at least $r-1$ color classes in this coloring which were not affected by the deletion of $v$ from $D$, and therefore, the union of any two of these color classes still contains a directed cycle. Clearly, each of the $r-1$ color classes still induces an acyclic subdigraph. Now simply give a unique new color to each vertex in $V(D-v)$ which is in none of the $r-1$ color classes. Perform a greedy merging process in which, as long as there exists a pair of color classes whose union is acyclic, we merge them. In the end, we obtain a complete coloring of $D-v$, in which no two of the $r-1$ color classes considered above was merged. Therefore, we have found a complete acyclic coloring of $D$ using at least $r-1=\adi(D)-1$ colors. This concludes the proof. 
\end{proof}
\begin{corollary} \label{InducedMonotonicity}
Let $D_1, D_2$ be digraphs and $G_1, G_2$ graphs.
\begin{itemize}
\item If $D_1$ is an induced subdigraph of $D_2$, then $\adi(D_1) \leq \adi(D_2)$.
\item If $G_1$ is and induced subgraph of $G_2$, then $\ava(G_1) \leq \ava(G_2)$.
\end{itemize}
\end{corollary}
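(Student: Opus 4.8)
The plan is to obtain the corollary directly from the preceding lemma by a routine induction on the number of deleted vertices. I will spell out the directed case; the undirected case is word-for-word the same with $\adi$ replaced by $\ava$ and ``acyclic subdigraph'' replaced by ``forest''.

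Since $D_1$ is an induced subdigraph of $D_2$, write $V(D_2)\setminus V(D_1)=\{v_1,\dots,v_k\}$ and set $D_2^{(0)}:=D_2$ and $D_2^{(i)}:=D_2^{(i-1)}-v_i$ for $i\in[k]$. Because deleting vertices one at a time always produces the induced subdigraph on the remaining vertex set (independently of the order of deletion), we have $D_2^{(k)}=D_1$. Applying the inequality $\adi(D-v)\leq\adi(D)$ of the preceding lemma to each step gives $\adi(D_2^{(i)})=\adi(D_2^{(i-1)}-v_i)\leq\adi(D_2^{(i-1)})$ for every $i\in[k]$, and chaining these yields $\adi(D_1)=\adi(D_2^{(k)})\leq\adi(D_2^{(0)})=\adi(D_2)$, as claimed.

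There is essentially no obstacle here: the only point to verify is that the single-vertex bound of the lemma can be iterated, which is immediate since the family of induced subdigraphs of $D_2$ is closed under removing one further vertex. I would remark that only the upper-bound half $\adi(D-v)\leq\adi(D)$ of the lemma is needed, and that the hypothesis ``induced'' is essential, since (as noted just before the lemma) the inequality fails for general subdigraphs.
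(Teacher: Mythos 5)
Your proof is correct and matches the paper's intended argument: the corollary is stated there as an immediate consequence of the preceding lemma, obtained by iterating the inequality $\adi(D-v)\leq\adi(D)$ over the deleted vertices exactly as you do. Spelling out the induction and noting that only the upper-bound half of the lemma is needed is a fine (and harmless) elaboration.
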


A special role in the theory of acyclic digraph colorings is played by \emph{directed separations}. As a simplest example, a \emph{directed cut} in a digraph $D$ is a vertex set of the form $$\delta^+(X)=\{e \in E(D)|\text{tail}(e) \in X, \text{head}(e) \notin X\},$$ where $(X,\overline{X})$ is a partition of the vertex set into two non-empty parts such that no edge starts in $\overline{X}:=V(D) \setminus X$ and ends in $X$. If $S:=\delta^+(X)$ forms a directed cut, no directed cycle in $D$ can use an edge from $S$ and therefore either stays in $D[X]$ or in $D[\overline{X}]$, which implies that $\vec{\chi}(D)=\max\{\vec{\chi}(D[X]),\vec{\chi}(D[\overline{X}])\}$. Iterating this argument, one can see that the dichromatic number of a digraph can be computed as the maximum over the dichromatic numbers of its strongly connected components. For the adichromatic number, such a simple relation does not hold true, as even for the disjoint union of two digraphs, there is no explicit way of computing the adichromatic number in terms of the adichromatic numbers of the two components. However, we can bring it down to exactly this case (see also Proposition \ref{examples}, 4.).

\begin{observation} \label{dirsep}
Let $D$ be a digraph, and let $S \subseteq E(D)$ be a directed cut. Then $\adi(D)=\adi(D-S)$. 
\end{observation}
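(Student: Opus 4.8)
The plan is to show the two inequalities $\adi(D) \le \adi(D-S)$ and $\adi(D) \ge \adi(D-S)$ separately, using the fact that a directed cut $S = \delta^+(X)$ is used by no directed cycle of $D$, so that the directed cycles of $D$ are exactly the directed cycles of $D-S$; in particular a vertex set $W \subseteq V(D)$ induces an acyclic subdigraph of $D$ if and only if it induces an acyclic subdigraph of $D-S$. This \emph{cycle-preservation} property is the engine of the whole argument.

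For the inequality $\adi(D-S) \le \adi(D)$, I would take an optimal complete acyclic coloring $(V_1,\dots,V_k)$ of $D-S$ with $k = \adi(D-S)$. By the cycle-preservation property, each $V_i$ still induces an acyclic subdigraph of $D$, and for each pair $i \ne j$ the set $V_i \cup V_j$ still contains a directed cycle in $D$ (the cycle witnessing completeness in $D-S$ survives, since adding the edges of $S$ back cannot destroy it). Hence the same partition is a complete acyclic coloring of $D$, giving $\adi(D) \ge k$.

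For the reverse inequality $\adi(D) \le \adi(D-S)$, the same observation applies almost verbatim in the other direction: given an optimal complete acyclic coloring of $D$ with $\adi(D)$ colors, each color class induces an acyclic subdigraph of $D$ and hence of $D-S$ (removing edges cannot create a cycle), and for each pair of classes the directed cycle in their union present in $D$ avoids $S$, hence lies in $D-S$, so the union is still cyclic in $D-S$. Thus the partition is complete acyclic for $D-S$ as well, and $\adi(D-S) \ge \adi(D)$. Combining the two inequalities yields the claimed equality.

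There is essentially no obstacle here; the only point requiring care is the justification that $S$ meets no directed cycle — but this is immediate from the definition of a directed cut, since traversing a cycle that used an edge of $\delta^+(X)$ would have to re-enter $X$ via an edge from $\overline{X}$ to $X$, and no such edge exists. One could also note that this observation, together with the fact that iterating directed cuts decomposes $D$ into its "source-sink-free" pieces, reduces the computation of $\adi$ to the case of disjoint unions, as remarked before the statement; but that is a corollary rather than part of the proof.
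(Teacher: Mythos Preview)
Your proof is correct and follows exactly the same idea as the paper's: since no directed cycle of $D$ uses an edge of the directed cut $S$, a vertex set is acyclic in $D$ if and only if it is acyclic in $D-S$, so the complete acyclic colorings of $D$ and $D-S$ coincide. The paper states this in a single sentence; your version simply spells out the two inequalities and the justification that $S$ meets no directed cycle.
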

\begin{proof}
This follows directly from the fact that the complete acyclic colorings of $D$ are the same as those of $D-S$, because a vertex subset $X \subseteq V(D)$ is acyclic in $D$ if and only if it is acyclic in $D-S$.
\end{proof}

We can go one step further and consider cuts which are almost directed, i.e., they have only a single edge in forward-direction. In this case, we can contract this forward-edge without increasing the adichromatic number.
\begin{lemma}\label{almostdirected}
Let $D$ be a digraph with a non-trivial partition $(X, \overline{X})$ of the vertex set such that $\delta^+(X)=\{e\}$ for some $e \in E(D)$. Then we have $\adi(D/e) \le \adi(D)$, where $D/e$ is obtained from $D$ by identifying the endpoints of $e$.
\end{lemma}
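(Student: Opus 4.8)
The plan is to lift an optimal complete acyclic coloring of $D/e$ to a complete acyclic coloring of $D$ that uses at least as many colors. Write $e=(u,w)$ with $u\in X$ and $w\in\overline{X}$, and let $z$ be the vertex of $D/e$ obtained by identifying $u$ and $w$; note that $(D/e)[S]=D[S]$ for every $S\subseteq V(D)\setminus\{u,w\}$, so deleting $z$ from a vertex set corresponds exactly to deleting $u$ and $w$.

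First I would record the structural consequence of the hypothesis: since $e$ is the unique edge of $D$ with tail in $X$ and head in $\overline{X}$, every directed walk of $D$ starting in $X$ and ending in $\overline{X}$ must traverse $e$. Hence every directed cycle of $D$ is of exactly one of three kinds: (i) contained in $D[X]$; (ii) contained in $D[\overline{X}]$; (iii) consisting of $e$ together with a directed $w$–$u$ path. This gives a dictionary between directed cycles of $D$ and of $D/e$. In one direction, a directed cycle $z\to c_1\to\dots\to c_p\to z$ of $D/e$ through $z$ lifts to a directed cycle of $D$ on a vertex set contained in $\{c_1,\dots,c_p\}\cup\{u,w\}$ that passes through $u$, through $w$, or through both (using $e$); the remaining possibility — the arc leaving $z$ coming from $u$ and the arc entering $z$ coming from $w$ — cannot occur, because it would yield a directed $u$–$w$ path of length at least $2$ in $D$, contradicting that $e$ is the only $X\to\overline{X}$ edge. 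In the other direction, every directed cycle of $D$ of type (i), of type (ii), or of type (iii) with at least $3$ vertices contracts along $e$ to a directed cycle of $D/e$.

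Now let $V_1,\dots,V_k$ be a complete acyclic coloring of $D/e$ with $k=\adi(D/e)$ and $z\in V_1$, and color $D$ by keeping the color of every vertex distinct from $u,w$ and placing both $u$ and $w$ into the class of $z$; that is, replace $V_1$ by $V_1':=(V_1\setminus\{z\})\cup\{u,w\}$. I would check two things. \emph{Acyclicity of $V_1'$}: a directed cycle inside $D[V_1']$ cannot avoid both $u$ and $w$ (it would lie in $(D/e)[V_1]$), so by the classification it is a type-(i) or type-(ii) cycle through $u$ respectively through $w$, or a type-(iii) cycle; in the first two cases contracting $e$ produces a directed cycle through $z$ inside $(D/e)[V_1]$, a contradiction, and in the last case, provided the cycle has at least $3$ vertices, the same contraction gives a contradiction. \emph{Completeness}: for $i,j\ge 2$ we have $D[V_i\cup V_j]=(D/e)[V_i\cup V_j]$, which contains a directed cycle; and for $j\ge 2$, a directed cycle of $(D/e)[V_1\cup V_j]$ either avoids $z$, and then already lies in $D[V_1'\cup V_j]$, or passes through $z$, and then by the dictionary lifts to a directed cycle of $D$ on a vertex subset of $V_1'\cup V_j$. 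Thus the lifted coloring is again complete and acyclic and uses $k$ colors, so $\adi(D)\ge k$.

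The gap in this argument, and what I expect to be the main obstacle, is exactly the type-(iii) cycle on two vertices: the case in which $e$ lies on a digon, i.e.\ $(w,u)\in E(D)$. Then $D[V_1']$ contains this digon and the naive lift is not acyclic. Here I would instead give $w$ the fresh color $k+1$; the class $(V_1\setminus\{z\})\cup\{u\}$ is acyclic by the argument above, so we obtain an acyclic coloring of $D$ with $k+1$ classes in which $V_2,\dots,V_k$ are untouched. To restore completeness without dropping below $k$ colors, one performs a controlled rather than a greedy merge: no pair $\{V_i,V_j\}$ with $i,j\ge2$ has an acyclic union, and the pair $\{(V_1\setminus\{z\})\cup\{u\},\{w\}\}$ does not either, since it contains the digon; so it should suffice to absorb $\{w\}$ into a single well-chosen class $V_j$ — or to observe that $\{w\}$ already has cyclic union with everything — in a way that also makes every pair $\{(V_1\setminus\{z\})\cup\{u\},V_j\}$ cyclic. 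Pinning down which of these pairs can fail to be cyclic, and showing that a single merge suffices, is where the real work lies.
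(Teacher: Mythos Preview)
Your lifting argument is exactly the paper's proof: take an optimal complete acyclic coloring of $D/e$, give both endpoints of $e$ the colour of the contraction vertex, and observe that this is still acyclic (every directed cycle of $D$ contracts to one of $D/e$ with the same colours) and still complete (re-insert $e$ into a bichromatic cycle of $D/e$ passing through $z$). Your ``dictionary'' just makes these two one-line checks explicit.

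The digon case you single out as ``the main obstacle'' is not treated separately in the paper at all. Under the paper's standing convention that all digraphs are loopless, the most natural reading is that the statement is only made when $D/e$ is again a loopless digraph, which forces $(w,u)\notin E(D)$; with that reading there is nothing left to do and your extra work is superfluous. If one instead interprets $D/e$ as deleting the created loop, then the paper's acyclicity check indeed fails on the length-$2$ cycle through $e$, but your proposed repair does not close the gap either: after splitting off $\{w\}$ as a $(k{+}1)$-st colour, there is no argument that a \emph{single} merge restores completeness, since for different $j,j'\ge 2$ both $\big((V_1\setminus\{z\})\cup\{u\}\big)\cup V_j$ and $\{w\}\cup V_{j'}$ may be acyclic simultaneously, and ``absorb $\{w\}$ into a single well-chosen class'' is asserted rather than proved. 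In short, on the part the paper actually argues you match it; the case you worry about is either outside the lemma's scope or an edge case that neither proof resolves.
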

\begin{proof}
Let $c:V(D/e) \rightarrow [\ell]$ be a complete acyclic coloring of $D$ using $\ell=\adi(D/e)$ colors. Let $c'$ be the vertex-coloring of $D$ in which every vertex not incident to $e$ is colored as by $c$, and where the endpoints of $e$ both receive the color which is given to the contraction vertex of $e$ under $c$. It is clear that this still defines an acyclic coloring, as any directed cycle in $D$, after contracting $e$, still yields a directed cycle in $D/e$ using exactly the same colors. On the other hand, if $i \neq j \in [\ell]$ is a pair of colors, then there exists a directed cycle in $D/e$ which uses only colors $i$ and $j$. Re-inserting the arc $e$ in case the cycle uses the contraction vertex of $e$ now defines a directed cycle in $D$ which also only uses colors $i$ and $j$. We therefore have found a complete acyclic coloring of $D$ which uses $\ell$ colors. This proves the claim. 
\end{proof}
This operation generalizes the so-called \emph{butterfly-contractions}, which have been investigated in structural digraph theory. An edge $e \in E(D)$ is called \emph{butterfly-contractible}, if it is the unique emanating edge of its tail or the unique edge entering its head. A \emph{butterfly minor} of a digraph $D$ is obtained by repeated contractions of butterfly-contractible edges from a subdigraph of $D$. The following is now a direct consequence of Corollary \ref{InducedMonotonicity} and Lemma \ref{almostdirected}:
\begin{corollary}
Let $D_1$ be an induced butterfly-minor of $D_2$, i.e., $D_1$ is obtained from an induced subdigraph of $D_2$ by repeatedly contracting butterfly-contractible edges. Then 
$$\adi(D_1) \leq \adi(D_2).$$
\end{corollary}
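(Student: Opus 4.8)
The plan is to combine the two previously established facts in the obvious way. By definition, if $D_1$ is an induced butterfly-minor of $D_2$, there is an induced subdigraph $D'$ of $D_2$ and a sequence $D' = H_0, H_1, \dots, H_k = D_1$ where each $H_{i+1}$ is obtained from $H_i$ by contracting a butterfly-contractible edge $e_i \in E(H_i)$. First I would note that Corollary~\ref{InducedMonotonicity} gives $\adi(D') \leq \adi(D_2)$, so it suffices to show $\adi(D_1) \leq \adi(D')$, and for this it is enough (by transitivity, induction on $k$) to show that a single butterfly-contraction does not increase the adichromatic number, i.e. $\adi(H_i/e_i) \leq \adi(H_i)$ for each $i$.

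Next I would observe that a single butterfly-contraction is a special case of the almost-directed cut contraction of Lemma~\ref{almostdirected}. Indeed, suppose $e$ is butterfly-contractible in a digraph $H$ with $\mathrm{tail}(e)=u$, $\mathrm{head}(e)=v$. If $e$ is the unique edge emanating from $u$, take $X := \{u\}$; then $\delta^+(X)$ consists precisely of the edges leaving $u$, which is just $\{e\}$, and $\overline{X} = V(H)\setminus\{u\}$ is non-empty as long as $H$ has at least two vertices (which we may assume, as otherwise there is nothing to contract). Dually, if $e$ is the unique edge entering $v$, take $X := V(H)\setminus\{v\}$; then $\delta^+(X)$ is exactly the set of edges entering $v$, namely $\{e\}$, and $\overline{X} = \{v\}$ is non-empty. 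In either case $(X,\overline{X})$ is a non-trivial partition with $\delta^+(X)=\{e\}$, so Lemma~\ref{almostdirected} applies and yields $\adi(H/e) \leq \adi(H)$.

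Putting these together: iterating the single-contraction bound along the sequence $D' = H_0, \dots, H_k = D_1$ gives $\adi(D_1) \leq \adi(D')$, and Corollary~\ref{InducedMonotonicity} gives $\adi(D') \leq \adi(D_2)$, whence $\adi(D_1) \leq \adi(D_2)$.

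There is essentially no obstacle here; the only point requiring a moment's care is the trivial-partition issue — verifying that $\overline{X}$ is non-empty in the reduction to Lemma~\ref{almostdirected}, which holds whenever the host digraph has at least two vertices, and contractions on single-vertex digraphs are vacuous. The statement is a clean corollary, so the "proof" is just the assembly of Corollary~\ref{InducedMonotonicity}, Lemma~\ref{almostdirected}, and an induction on the number of contractions.
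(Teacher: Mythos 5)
Your proposal is correct and matches the paper's intent exactly: the paper states this corollary without proof as a direct consequence of Corollary~\ref{InducedMonotonicity} and Lemma~\ref{almostdirected}, and your argument simply spells out that assembly, including the correct choice of $X=\{u\}$ or $X=V(H)\setminus\{v\}$ to realize a butterfly-contraction as an almost-directed-cut contraction. Nothing is missing.
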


The following statements, which yield lower bounds on the a-vertex arboricity of a graph in terms of the a-vertex arboricities of induced minors, will form a central tool in the proof of our main result, Theorem \ref{FVlowerbound}.

\begin{lemma} \label{treecontraction}
Let $G$ be a (multi-)graph and let $T=G[X]$ be an induced subtree of $G$. Let $G/T$ denote the (multi-)graph obtained from $G$ by deleting all edges of $T$ from $G$ and identifying $X$ into a single vertex $v_X$. Then 
$$\ava(G/T) \leq \ava(G).$$
\end{lemma}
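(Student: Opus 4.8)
The plan is to take an optimal complete arboreal coloring of $G/T$ and to "blow up" the contraction vertex back into the tree $T$, obtaining a complete arboreal coloring of $G$ with the same number of colors. Concretely, let $(V_1,\dots,V_\ell)$ be a complete arboreal coloring of $G/T$ with $\ell=\ava(G/T)$ colors, and assume $v_X\in V_1$. Define a coloring of $G$ by giving every vertex of $X$ the color $1$ and every vertex $w\in V(G)\setminus X$ the color it has in $G/T$; write $V_1'=(V_1\setminus\{v_X\})\cup X$ and $V_i'=V_i$ for $i\ge 2$. I would then show that $(V_1',\dots,V_\ell')$ is a complete arboreal coloring of $G$, which gives $\ava(G)\ge\ell$.

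First I would check that each class induces a forest. For $i\ge 2$ this is immediate, since $V_i'\subseteq V(G)\setminus X$ and the operation of deleting the edges of $T$ and contracting $X$ does not change the subgraph induced on any subset of $V(G)\setminus X$; hence $G[V_i']=G/T[V_i]$ is a forest. For $V_1'$, set $F_1:=G/T[V_1]$, a forest containing $v_X$. The subgraph $G[V_1']$ is obtained from $F_1$ by replacing $v_X$ by the tree $T$ and re-distributing the edges formerly at $v_X$ among the vertices of $X$. The key observations are that, in the forest $F_1$, the neighbors of $v_X$ lie in pairwise distinct components of $F_1-v_X$ (otherwise a path between two of them plus $v_X$ would close a cycle), and that $v_X$ is joined to each such neighbor by exactly one edge (no bigon, since $F_1$ is a forest), so in $G$ each of these neighbors has exactly one edge into $X$. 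Thus $G[V_1']$ arises from the forest $(F_1-v_X)\,\sqcup\, T$ by adding, for each neighbor $w$ of $v_X$ in $F_1$, one edge joining $w$ to a vertex of $T$; since these $w$'s lie in distinct components, no cycle is created, and $G[V_1']$ is a forest.

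Next I would verify completeness. Fix colors $i<j$; merging them in $G/T$ produces a cycle $C$. If $v_X\notin V(C)$ (in particular whenever $2\le i<j$), then $C$ uses only vertices of $V(G)\setminus X$ and edges unaffected by the operation, so $C$ is a cycle of $G[V_i'\cup V_j']$. Otherwise $i=1$ and $C$ passes through $v_X$; let $a,b$ be the neighbors of $v_X$ on $C$, and let $x,x'\in X$ be such that the two edges of $C$ at $v_X$ come from edges $xa$ and $x'b$ of $G$. Replacing in $C$ the vertex $v_X$ and its two incident edges by the edge $xa$, a path of $T$ from $x$ to $x'$, and the edge $x'b$ yields a closed walk in $G[V_1'\cup V_j']$. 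When $a\neq b$, the $a$–$b$ subpath $C-v_X$ (which avoids $X$) and this $T$-detour (which stays inside $X$ except at its endpoints) are internally disjoint and each has at least one edge, so their union contains a cycle; when $a=b$, $C$ is a digon at $v_X$ and one argues directly, using a $T$-path between the two vertices of $X$ feeding those parallel edges (or a bigon of $G$ if they coincide). In all cases $G[V_1'\cup V_j']$ contains a cycle, so the coloring is complete and $\ava(G)\ge\ell=\ava(G/T)$.

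\textbf{Expected main obstacle.} The routine part is the bookkeeping for color classes other than $1$. The delicate steps are (i) arguing that blowing $v_X$ up into $T$ keeps class $1$ acyclic — this rests on the fact that the $F_1$-neighbors of $v_X$ lie in different components of $F_1-v_X$ and must be carefully combined with the multigraph/parallel-edge technicalities — and (ii) the completeness case where the monochromatic cycle runs through $v_X$, where one must reroute through $T$ and certify that a genuine cycle survives, again watching out for short/degenerate cases such as digons at $v_X$.
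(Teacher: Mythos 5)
Your proposal is correct and follows essentially the same route as the paper: lift an optimal complete arboreal coloring of $G/T$ to $G$ by giving all of $X$ the color of $v_X$, then check acyclicity of each class and restore completeness by rerouting cycles through the unique path in $T$. The only cosmetic difference is that you verify acyclicity of the class containing $X$ by a direct structural argument, whereas the paper projects a hypothetical monochromatic cycle of $G$ down to a monochromatic closed walk in $G/T$; both are fine.
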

\begin{proof}
Let $\ell:=\ava(G/T)$ and let $c:V(G/T) \rightarrow [\ell]$ be a complete arboreal coloring of $G/T$ using all $\ell$ colors. We claim that the coloring $c':V(G) \rightarrow [\ell]$, 
$$c'(v):=\begin{cases} c(v), & \text{if }v \not\in X \cr
c(v_X), & \text{if }v \in X  \end{cases},$$ is also a complete arboreal coloring of $G$ that uses $\ell$ colors. For this purpose, we must verify that there are no monochromatic cycles in $G$ with respect to $c$ and that in the union of any two color classes, there is a cycle. For the first part, suppose there was a cycle $C$ in $G$ all whose vertices are colored $i$ in $c'$. Because $T$ is an induced tree, $C$ must contain a vertex outside $X$. Consequently, after the contraction of $X$, the cycle $C$ yields a monochromatic closed walk of positive length in $G/T$, which by definition of $c'$ must still be monochromatic, a contradiction. On the other hand, given any pair $i \neq j \in [\ell]$ of colors, there is a cycle in $G/T$ which uses only colors $i$ and $j$ according to $c$. If the cycle does not use $v_X$, this yields also a cycle in $G$ which only uses colors $i$ and $j$ according to $c'$, as desired. In the case that the cycle traverses $v_X$, let $e,f$ be the two incident edges of $v_X$ on the cycle. By connecting the endpoints of $e$ and $f$ in $T$ by the unique monochromatic connection path in $T$ if necessary, we find that also in this case there is a cycle in $G$ which uses only colors $i,j$ according to $c'$. Hence, $c'$ defines a complete arboreal coloring of $G$ with $\ell$ colors, and $\ava(G) \ge \ell=\ava(G/T)$.
\end{proof}
\begin{corollary} \label{inducedsubdivisions}
Let $G$ and $H$ be simple graphs such that $G$ contains an induced subdivision of $H$. Then $\ava(G) \ge \ava(H)$.
\end{corollary}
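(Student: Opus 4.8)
The plan is to deduce Corollary \ref{inducedsubdivisions} from Lemma \ref{treecontraction} by repeatedly contracting the subdivision paths. Suppose $G$ contains an induced subgraph $S$ which is a subdivision of $H$; write $B \subseteq V(S)$ for the set of \emph{branch vertices} (those corresponding to vertices of $H$) and recall that $S$ is obtained from $H$ by replacing each edge $uv \in E(H)$ by an internally disjoint path $P_{uv}$ between the branch vertices corresponding to $u$ and $v$. Since $\ava$ is monotone under induced subgraphs (Corollary \ref{InducedMonotonicity}), it suffices to show $\ava(S) \ge \ava(H)$, so we may assume $G = S$ from now on.

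The key step is to turn each path $P_{uv}$ into a single edge by a sequence of tree contractions. Fix an edge $uv$ of $H$ and let $P = P_{uv}$ be the corresponding path in $S$; let $X$ be the set of internal vertices of $P$ together with exactly one of its two endpoints, say the branch vertex corresponding to $u$. Then $S[X]$ is a path, hence an induced subtree of $S$ (it is induced because $S$ is an induced subdivision and the only chords that could occur would be at branch vertices, which have degree in $S$ equal to their degree in $H$ and thus no extra edges among the subdivision vertices). Applying Lemma \ref{treecontraction} with $T = S[X]$ contracts this path to a single vertex, which we identify with the branch vertex of $u$; the result is a graph in which the edge $uv$ of $H$ is now represented by a single edge, while all other subdivision paths are untouched and remain induced paths between branch vertices. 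Crucially, $\ava$ does not decrease under this operation.

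Iterating over all edges of $H$, one at a time, we obtain after $|E(H)|$ applications of Lemma \ref{treecontraction} a graph $H'$ whose vertex set is exactly the set of branch vertices and in which two branch vertices are adjacent precisely when the corresponding vertices of $H$ are adjacent; that is, $H'$ is isomorphic to $H$ (as a simple graph — note multi-edges cannot be created here since distinct edges of $H$ correspond to internally disjoint paths with distinct endpoint pairs). Along this chain we have $\ava(S) \ge \ava(H_1) \ge \cdots \ge \ava(H') = \ava(H)$, which combined with $\ava(G) \ge \ava(S)$ gives the claim.

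The main point requiring care — though it is not deep — is the bookkeeping that the intermediate graphs stay ``nice'': after contracting one path, the remaining subdivision paths are still induced paths whose internal vertices are distinct from all branch vertices, so that each subsequent set $X$ really does induce a tree (in fact a path) in the current graph, and that no bigons are introduced that would spoil the identification $H' \cong H$. Once this is observed, the corollary follows by straightforward induction on $|E(H)|$ using Lemma \ref{treecontraction} and Corollary \ref{InducedMonotonicity}.
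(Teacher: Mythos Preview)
Your proof is correct and follows essentially the same approach as the paper's: pass to the induced subdivision $S$ via Corollary~\ref{InducedMonotonicity}, then repeatedly apply Lemma~\ref{treecontraction} to collapse the subdivision paths and recover $H$. One minor wording slip: the phrase ``$\ava$ does not decrease under this operation'' is backwards (Lemma~\ref{treecontraction} says $\ava$ may \emph{drop} upon contraction), but your displayed chain $\ava(S) \ge \ava(H_1) \ge \cdots \ge \ava(H)$ shows you have the direction right.
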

\begin{proof}
Repeated application of Lemma \ref{treecontraction} to contractions of subdivision edges yields that the a-vertex arboricity of any subdivision of a graph is lower bounded by the a-vertex arboricity of the graph itself. The claim now follows from Corollary \ref{InducedMonotonicity}.
\end{proof}
\section{Upper Bounds for Minimum Feedback Vertex Sets} \label{main}
The main goal of this section is to complement the lower bounds on $\tau$ via $\ava$ and $\adi$ from Proposition~\ref{FVupperbound} with upper bounds. 

By Observation \ref{achrom}, the adichromatic number of the directed biorientation $\overset{\leftrightarrow}{K}_{n,n}$ of the complete bipartite graph is given by the achromatic number of $K_{n,n}$, which is $2$. However, the size of a smallest feedback vertex set equals $n$. Similarly, the undirected biorientation $2K_{n,n}$ has a-vertex arboricity $2$ but $\tau(2K_{n,n})=n$ for any $n \ge 1$. In the rest of this section, we therefore focus on simple digraphs and graphs and demonstrate that while there are simple digraphs $D$ with bounded adichromatic number and unbounded $\tau(D)$ (Proposition \ref{nobound}), $\tau(G)$ is upper bounded in terms of $\ava(G)$ for simple graphs (Theorem \ref{FVlowerbound}). On the way to this result we achieve an upper bound for the degeneracy by a function of $\ava$ (Corollary~\ref{degeneracy}). Moreover, we show that in minor-closed classes $\tau(D)\leq f(\adi(D))$ for some function $f$ (Theorem~\ref{minor-closed}).
Corollaries of the above include non-trivial relations between $\ava$ and $\adi$ in Subsection~\ref{subsec:relations}.

The following construction gives a family of simple digraphs with an unbounded size of the feedback vertex set but bounded adichromatic number. Additionally, these digraphs can have arbitrarily large directed girth. 

Let $D(n,k)$ with $n \ge 1, k \ge 3$ denote a cyclically oriented Tur\'{a}n graph, that is, the $k$-partite digraph whose vertex set consists of $k$ disjoint partition classes $V_1,\ldots,V_k$ of size $n$ each and where $E(D)=\bigcup_{i=1}^{k}{(V_i \times V_{i+1})}$ ($k+1:=1$). Hence $D(n,k)$ is obtained from $\vec{C}_k$ by replacing each vertex by $n$ independent copies.

\begin{proposition}\label{nobound}
For any $n \ge 1, k \ge 3$, we have $\adi(D(n,k)) \leq k$ while $\tau(D(n,k))=n$.
\end{proposition}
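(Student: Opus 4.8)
The plan is to establish the two claims separately: the easy lower bound $\tau(D(n,k)) \geq n$ together with the matching upper bound $\tau(D(n,k)) \leq n$ (giving equality), and the bound $\adi(D(n,k)) \leq k$.

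\textbf{Computing $\tau$.} For the upper bound $\tau(D(n,k)) \leq n$, note that deleting all $n$ vertices of a single partite class, say $V_1$, destroys every directed cycle: any directed cycle in $D(n,k)$ must, following the arc orientation $V_i \to V_{i+1}$, visit the classes in cyclic order and hence pass through $V_1$. So $V(D(n,k)) \setminus V_1$ induces an acyclic subdigraph (in fact a subgraph of the transitive structure $V_2 \to V_3 \to \cdots \to V_k$), and $\tau \leq n$. For the lower bound $\tau(D(n,k)) \geq n$, I would exhibit $n$ pairwise vertex-disjoint directed cycles: pick a perfect matching between consecutive classes, i.e.\ for each $t \in [n]$ take the directed cycle $v_t^{(1)} \to v_t^{(2)} \to \cdots \to v_t^{(k)} \to v_t^{(1)}$ where $v_t^{(i)}$ is the $t$-th vertex of $V_i$. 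These $n$ directed cycles are vertex-disjoint, so any feedback vertex set needs at least one vertex from each, giving $\tau \geq n$. Combining, $\tau(D(n,k)) = n$.

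\textbf{Bounding $\adi$.} Here I would argue that in any complete acyclic coloring of $D(n,k)$, at most $k$ colors can be used. The key observation is a structural one about acyclic (equivalently, color-class-inducing) subsets: any directed cycle in $D(n,k)$ uses at least one vertex from every partite class $V_i$. Now suppose we have a complete acyclic coloring with color classes $W_1, \dots, W_m$. The natural idea is to associate to each color class $W_a$ some "small" piece of data that is forced to differ between any two classes, and then bound the number of possible data values by $k$. Concretely, I would look at, for each color $a$, the set of indices $I_a = \{ i \in [k] : W_a \cap V_i = \emptyset \}$ of partite classes that color $a$ avoids entirely. If two colors $a \neq b$ satisfied $I_a \cap I_b \neq \emptyset$, say both avoid $V_i$, then $W_a \cup W_b$ omits the entire class $V_i$; but every directed cycle meets $V_i$, so $W_a \cup W_b$ would be acyclic, contradicting completeness. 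Hence the sets $I_a$ are pairwise disjoint subsets of $[k]$. It remains to show each $I_a$ is nonempty, i.e.\ every acyclic color class must avoid some partite class altogether. This is where I expect the main obstacle to lie: it is not literally true that an acyclic induced subdigraph must miss an entire $V_i$ (consider a set hitting every class but inducing no cycle — e.g.\ a "staircase" that only partially connects up). So the crude argument via $I_a$ needs refinement.

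\textbf{Refining the $\adi$ bound.} The fix I would pursue is to track, for each color class, not which partite classes it avoids but a more robust invariant, using the almost-directed-cut / contraction machinery (Observation~\ref{dirsep} and Lemma~\ref{almostdirected}) or directly a counting argument. One clean route: show that $D(n,k)$ is a butterfly minor (indeed obtained by blowing up, which is the reverse of repeatedly contracting) of a small gadget, or more simply argue by induction on $n$. For $n = 1$, $D(1,k) = \vec{C}_k$, and $\adi(\vec{C}_k) = \tau(\vec{C}_k) + 1$; but $\tau(\vec{C}_k) = 1$, so $\adi = 2 \leq k$ — wait, that is weaker than needed only if $k$ is what we want, and indeed $2 \leq k$ holds. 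For the inductive step, relate $D(n,k)$ to $D(n-1,k)$ by deleting one vertex from each class (using the induced-subdigraph monotonicity, Corollary~\ref{InducedMonotonicity}, in reverse is not available, so instead I would delete a single vertex and use $\adi(D - v) \geq \adi(D) - 1$, which is the wrong direction). Given these difficulties, the cleanest approach is probably the direct one: in a complete acyclic coloring, consider a color class $W$ and the "interval structure" it induces on the cyclic sequence $V_1, \dots, V_k$; show that merging two classes that are "cyclically interleaved everywhere" forces a directed cycle, and that only $k$ mutually non-interleaving classes can coexist. I would make this precise by picking, for each color $a$, a partite index $\phi(a)$ such that $W_a \cap V_{\phi(a)}$ is "dominated from the left" within its class, and showing $\phi$ is injective. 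The technical heart — and the step I expect to be hardest — is verifying that distinct colors genuinely receive distinct values of this invariant, i.e.\ that the completeness hypothesis (a directed cycle appears in every pairwise union) is strong enough to force the $\leq k$ bound; this requires carefully using that a directed cycle in a blown-up $\vec{C}_k$ can be "rerouted" through any chosen representative in each class, so that two color classes whose union is cyclic must in fact be cyclically linked through all $k$ classes simultaneously.
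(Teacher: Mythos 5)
Your computation of $\tau(D(n,k))=n$ is correct and matches the paper. The problem is in the $\adi$ part: you set up exactly the right argument (the sets $I_a$ of avoided partite classes are pairwise disjoint, because every directed cycle in $D(n,k)$ must visit every class in cyclic order), but then you wrongly convince yourself that the crucial step --- each $I_a$ is nonempty --- might fail, and you abandon the argument for vague ``refinements'' (induction on $n$, butterfly minors, interval invariants) that are never carried out. The ``staircase'' counterexample you worry about does not exist in this digraph: by definition $E(D(n,k))=\bigcup_i (V_i\times V_{i+1})$, i.e.\ \emph{every} vertex of $V_i$ sends an arc to \emph{every} vertex of $V_{i+1}$. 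Hence if a color class $W_a$ meets every partite class, you may pick one vertex $v_i\in W_a\cap V_i$ for each $i\in[k]$ and $v_1\to v_2\to\cdots\to v_k\to v_1$ is a directed cycle inside $W_a$, contradicting acyclicity. So every acyclic color class avoids at least one partite class, $I_a\neq\emptyset$, and since the $I_a$ are pairwise disjoint subsets of $[k]$ there are at most $k$ colors. (Equivalently, phrased as in the paper: with $\ell\ge k+1$ colors each avoiding some class, pigeonhole gives two colors avoiding a common class $V_i$; their union misses $V_i$ entirely and hence contains no directed cycle, contradicting completeness.)

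In summary: your intended argument is the paper's argument, and it works; the gap is that you did not verify the one-line fact that makes it go through, and instead declared it problematic and left the proof unfinished. As written, the proposal does not establish $\adi(D(n,k))\le k$.
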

\begin{proof}
Clearly, we can find a packing of $n$ vertex-disjoint directed cycles in $D(n,k)$, and so $\tau(D(n,k)) \ge n$. On the other hand, $V_1$ forms a feedback vertex set, and we conclude that $\tau(D(n,k))=n$. To see that $\adi(D(n,k)) \le k$, let $c:V(D(n,k)) \rightarrow [\ell]$ be a complete acyclic coloring using $\ell$ colors, and assume towards a contradiction that $\ell \geq k+1$. For each $i \in [\ell]$, there is at least one partition class in which $i$ does not appear. By the pigeon-hole principle, we therefore find a pair $c_1 \neq c_2 \in [\ell]$ of colors such that both do not appear in a certain partition class. However, there must be a directed cycle in $D(n,k)$ using only vertices with color $i$ or $j$. This contradiction shows $\adi(D(n,k)) \leq k$. 
\end{proof}
The rest of this paragraph is devoted to prove an inverse relation between the a-vertex arboricity and the smallest size of a feedback vertex set for undirected simple graphs.
\begin{theorem}\label{FVlowerbound}
There is a function $f:\mathbb{N} \rightarrow \mathbb{N}$ such that for any simple graph $G$, we have $$\tau(G) \leq f(\ava(G)).$$
\end{theorem}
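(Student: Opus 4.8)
The plan is to prove the contrapositive in a quantitative form: assuming $\ava(G)\le k$, bound $\tau(G)$ by a function of $k$. The first reduction is the classical Erd\H{o}s--P\'osa theorem, which gives $\tau(G)\le c_1\cdot\nu(G)\log(\nu(G)+2)$, where $\nu(G)$ denotes the maximum number of pairwise vertex-disjoint cycles in $G$; hence it suffices to bound $\nu(G)$ in terms of $k$. So let $C_1,\dots,C_m$ be vertex-disjoint cycles of $G$. Replacing each $C_i$ by a shortest cycle contained in $G[V(C_i)]$, we may assume each $C_i$ is an \emph{induced} cycle of $G$, and they are still pairwise disjoint. The goal becomes: $m$ is bounded by a function of $k$.

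Next I would form the auxiliary graph $R$ on vertex set $[m]$ with $ij\in E(R)$ exactly when $G$ has an edge with one endpoint in $V(C_i)$ and the other in $V(C_j)$, and apply Ramsey's theorem to get $I\subseteq [m]$ with $|I|\ge\frac12\log_2 m$ such that $R[I]$ is either edgeless or complete. In the edgeless case the induced subgraph $G\bigl[\bigcup_{i\in I}V(C_i)\bigr]$ is precisely the disjoint union of $|I|$ cycles, so Proposition~\ref{examples}(4) together with Corollary~\ref{InducedMonotonicity} yields $\ava(G)\ge n$ whenever $\binom{n}{2}\le|I|$; since $\ava(G)\le k$ this forces $|I|<\binom{k+1}{2}$.

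The crux is the complete case, where the induced cycles $(C_i)_{i\in I}$ are pairwise joined by an edge of $G$. Working inside $G':=G\bigl[\bigcup_{i\in I}V(C_i)\bigr]$, fix for each $i\in I$ a vertex $x_i\in V(C_i)$; then $T_i:=C_i-x_i$ is an induced path of $G'$ (the cycle $C_i$ is induced, and the only $G'$-edges leaving $V(C_i)$ go to the other cycles), hence an induced subtree. Contracting the $T_i$ one after another and applying Lemma~\ref{treecontraction} repeatedly gives a (multi-)graph $G''$ with $\ava(G'')\le\ava(G')\le\ava(G)$, in which each $C_i$ has collapsed to a single vertex $w_i$ with a pendant multi-edge to $x_i$, and every inter-cycle edge of $G'$ now joins one of $\{w_i,x_i\}$ to one of $\{w_j,x_j\}$. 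A second Ramsey application — $4$-colouring, for $i<j$ in $I$, the pair $\{i,j\}$ according to which of $\{w_i,x_i\}$ and which of $\{w_j,x_j\}$ a fixed inter-cycle edge attaches to — produces $I'\subseteq I$ on which all these attachments follow the same pattern. Checking the four patterns, the corresponding endpoints then induce in $G''$ either a complete graph $K_{|I'|}$ (patterns $(w,w)$ and $(x,x)$) or, after selecting suitable halves, a complete bipartite graph $K_{\lfloor|I'|/2\rfloor,\lfloor|I'|/2\rfloor}$ (the mixed patterns, where the relevant sides turn out to be independent). In either case Proposition~\ref{examples}(1)--(2) with Corollary~\ref{InducedMonotonicity} gives $\ava(G)\ge\ava(G'')=\Omega(|I'|)$, so $|I'|=O(k)$, and tracing back through the two Ramsey steps bounds $|I|$, hence $m$, hence $\tau(G)$, by an explicit (if tower-type) function of $k$. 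The degeneracy bound of Corollary~\ref{degeneracy} comes out along similar lines: a graph of large degeneracy has a subgraph of large minimum degree, which after passing to a subgraph of large girth contains an induced subdivision of a large clique (Kühn--Osthus-type induced topological minors), so by Corollary~\ref{inducedsubdivisions} and Proposition~\ref{examples}(2) its a-vertex arboricity is large.

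The main obstacle is precisely the complete case. Contracting the cycles via Lemma~\ref{treecontraction} is what converts ``many pairwise joined disjoint cycles'' into a clique, but one must ensure the resulting complete (or complete bipartite) graph is realized as an \emph{induced} subgraph, which is why the second Ramsey step on attachment vertices, and the case analysis of the four attachment patterns, are unavoidable; keeping track of exactly which vertex each surviving inter-cycle edge lands on, and of the pendant multi-edges created by the contractions, is the delicate part. A secondary subtlety is the degeneracy bound, where it is essential to invoke an \emph{induced} topological-minor theorem and therefore to first pass to a high-girth subgraph rather than use the ordinary topological-minor estimates.
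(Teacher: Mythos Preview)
Your overall architecture matches the paper's: reduce via Erd\H{o}s--P\'osa to bounding $\nu(G)$, take a maximum packing of induced cycles, form the model graph on the cycles, and split via Ramsey into the independent case (handled exactly as you say, by Proposition~\ref{examples}(4) and Corollary~\ref{InducedMonotonicity}) and the clique case. The divergence---and the gap---is entirely in how the clique case is finished.

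Your four-colour Ramsey on attachment patterns does not deliver the induced $K_n$ or $K_{n,n}$ you need. In the $(w,w)$ pattern the subgraph of $G''$ induced on $\{w_i:i\in I'\}$ is in general a \emph{multigraph}: the fixed edge $e_{ij}$ is only one of possibly several $T_i$--$T_j$ edges in $G'$, all of which survive as parallel $w_iw_j$-edges after contraction, and Proposition~\ref{examples}(2) concerns only the simple $K_n$. In the mixed patterns your claim that ``the relevant sides turn out to be independent'' is unjustified: the pattern constrains only the single fixed edge $e_{ij}$, not the remaining inter-cycle edges, so both $\{w_i:i\in A\}$ and $\{x_j:j\in B\}$ may carry edges among themselves, and the cross-edges may be multiple. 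One can attempt further Ramsey refinements (simple vs.\ multiple between the $w_i$; presence vs.\ absence of $x_ix_j$; simple vs.\ multiple across the bipartition), but one branch inevitably lands on an induced complete bipartite \emph{bigon} graph, and such a graph has $\ava=2$ for every size---so no contradiction results along that branch and the argument does not close without a further tool.

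That further tool is the degeneracy bound, which the paper uses as the centrepiece of the clique case rather than as a side remark. First one shows that only boundedly many of the pairwise-joined cycles can be long: if $R=R(2A,A)$ of them have length $\ge R$, one can choose each deleted vertex $v_i$ to avoid \emph{all} the fixed inter-cycle edges among these $R$ cycles, so after contracting the paths $P_i=C_i-v_i$ every fixed edge automatically lands in the $(w,w)$ pattern; a single blue/red Ramsey on ``simple vs.\ parallel between $w_i,w_j$'' then yields either an induced simple $K_{2A}$ or an induced bigon-clique on $A$ vertices, each giving $\ava\ge A$. Hence almost all cycles in the clique are short (length $<R$), and now the $\binom{k}{2}$ distinct fixed inter-cycle edges live in an induced subgraph on at most $Rk$ vertices which inherits bounded $\ava$ and therefore bounded degeneracy $d$ by Corollary~\ref{degeneracy}, forcing $\binom{k}{2}\le dRk$ and bounding $k$. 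This final density-versus-degeneracy step is precisely what your outline is missing; your arbitrary choice of $x_i$ (rather than one avoiding all fixed edges) is what forces you into the four-pattern analysis, and that analysis cannot be completed without degeneracy.
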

We prepare the proof with some helpful statements. For our next result we will combine Corollary~\ref{inducedsubdivisions} with the following  strong result from \cite{kuehn}:\begin{theorem}[\cite{kuehn}]\label{Hammer}
For any $s \in \mathbb{N}$ and any simple graph $K$ there is some $d=d(s,K) \in \mathbb{N}$ such that every simple graph $G$ with minimum degree greater than $d$ contains $K_{s,s}$ as a subgraph or an induced subdivision of $K$.
\end{theorem}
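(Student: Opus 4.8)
The statement is the K\"uhn--Osthus theorem, so I would follow its strategy. \textbf{Step 1 (reduction to a complete graph).} Set $r:=|V(K)|$. It suffices to produce an \emph{induced} subdivision of $K_r$ in which every edge is subdivided at least once. Indeed, given such a subdivision $S$ with branch vertices $b_1,\dots,b_r$ and subdivision paths $P_{ij}$, the branch vertices are pairwise non-adjacent (each edge is subdivided once), and since $S$ is induced it carries no chords; hence deleting the interior vertices of every path $P_{ij}$ with $ij\notin E(K)$ leaves an induced subgraph of $S$ that is precisely an induced subdivision of $K$. From now on the target is therefore an induced subdivision of $K_r$, and since $\delta(G)>d$ is a stronger hypothesis than large average degree, I may freely pass to subgraphs of large minimum degree.

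\textbf{Step 2 (sparsity from $K_{s,s}$-freeness).} Two quantitative consequences of excluding $K_{s,s}$ drive the argument. Globally, by the K\H{o}v\'{a}ri--S\'{o}s--Tur\'{a}n bound a $K_{s,s}$-free graph on $n$ vertices has only $O_s(n^{2-1/s})$ edges, so the hypothesis $\delta(G)>d$ forces $n$ to grow with $d$: the host graph is large and globally sparse. Locally, any $s$ vertices have fewer than $s$ common neighbours, since $s$ vertices together with $s$ common neighbours would form a $K_{s,s}$. This codegree bound is the engine for keeping the construction chord-free. Via a Koml\'{o}s--Szemer\'{e}di-type extraction I may additionally assume that $G$ is an \emph{expander}: every vertex set that is neither too small nor larger than half of $V(G)$ has a neighbourhood substantially larger than itself.

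\textbf{Step 3 (greedy induced routing).} Choose $r$ branch vertices that are pairwise far apart, and connect the $\binom{r}{2}$ pairs one at a time by internally disjoint induced paths. At each stage the partial structure $U$ built so far has bounded size, a function of $r$ and the path lengths only. To extend, I route the next path using the expansion of $G$ to travel from one branch vertex to the next while avoiding $U$ together with the set $N^\ast$ of vertices that would create a chord to $U$. Here the codegree bound is essential: because no $s$ vertices share $s$ common neighbours, the ``dangerous'' set $N^\ast$ of chord-creating vertices grows only slowly with $|U|$, so expansion outpaces it and a safe connecting path always survives. Taking a shortest such path makes it internally induced automatically.

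\textbf{Main obstacle.} The heart of the proof, and the step I expect to be hardest, is maintaining \emph{global} inducedness --- ruling out chords inside a single path, between two distinct paths, and at the branch vertices --- while individual vertex degrees may be as large as $n-1$. The subtlety is that one cannot simply forbid the whole neighbourhood of $U$, as it may be almost all of $G$; instead one must exploit that $K_{s,s}$-freeness bounds how many vertices can be jointly adjacent to many vertices of $U$, and then calibrate the expansion against the minimum-degree threshold $d=d(s,K)$ so that at every step the reachable region dwarfs the forbidden region. Making this trade-off quantitative, and ordering the construction of the $\binom{r}{2}$ paths so that the accumulated forbidden sets never catch up with the available expansion, is the technical core of the K\"uhn--Osthus argument.
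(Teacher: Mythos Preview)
The paper does not give a proof of this statement at all; Theorem~\ref{Hammer} is quoted from the K\"uhn--Osthus paper \cite{kuehn} and used as a black box in the derivation of Corollary~\ref{degeneracy}. There is therefore nothing in the present paper to compare your proposal against.

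That said, your sketch is a faithful high-level outline of the original K\"uhn--Osthus strategy: the reduction to an induced $K_r$-subdivision with all edges subdivided (your Step~1) is exactly how one passes from an arbitrary target $K$ to a complete one; the two uses of $K_{s,s}$-freeness --- K\H{o}v\'ari--S\'os--Tur\'an for global sparsity and the codegree bound for local chord control --- are the right ingredients; and the expander extraction plus greedy induced routing is the correct framework. You also correctly identify the genuine difficulty, namely that one cannot simply avoid the full neighbourhood of the partial structure and must instead exploit the bounded codegree to keep the forbidden set small relative to expansion. As a proof \emph{sketch} this is accurate; turning it into an actual proof requires the careful quantitative bookkeeping that constitutes the bulk of \cite{kuehn}, and your proposal does not supply those details, but it would be unreasonable to expect them in a course context.
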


For a simple graph $G$, the \emph{degeneracy} of $G$ is defined as $\deg(G):=\max_{H \subseteq G}{\delta(G)}$, where the maximum is taken over all subgraphs (or, equivalently, all induced subgraphs) of $G$. 
We show that simple graphs of bounded a-vertex-arboricity have bounded degeneracy.

\begin{corollary} \label{degeneracy}
There exists a function $g:\mathbb{N} \rightarrow \mathbb{N}$ such that $\deg(G) \leq g(\ava(G))$ for every simple graph $G$.
\end{corollary}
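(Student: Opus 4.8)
The plan is to derive the bound from the dichotomy in Theorem~\ref{Hammer}, by showing that each of its two possible outcomes forces the a-vertex arboricity of $G$ to be large. First I would reduce to a witnessing induced subgraph: writing $D:=\deg(G)$, by definition of degeneracy $G$ has a subgraph of minimum degree $D$, and taking the induced subgraph of $G$ on the vertex set of this subgraph only increases degrees, so $G$ has an induced subgraph $H$ with $\delta(H)\ge D$. Now set $s:=\ava(G)+1$, apply Theorem~\ref{Hammer} to $H$ with this $s$ and with $K:=K_{2s}$, and put $g(\ava(G)):=d(s,K_{2s})$, a quantity depending only on $\ava(G)$. If $D>g(\ava(G))$, then $H$ contains $K_{s,s}$ as a subgraph or an induced subdivision of $K_{2s}$, and I claim each case yields $\ava(G)\ge s$, contradicting $s=\ava(G)+1$ and thus proving $\deg(G)=D\le g(\ava(G))$.

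In the induced-subdivision case the contradiction is immediate: since $H$ is an induced subgraph of $G$, an induced subdivision of $K_{2s}$ in $H$ is also an induced subdivision of $K_{2s}$ in $G$, so Corollary~\ref{inducedsubdivisions} together with $\ava(K_{2s})=\lceil 2s/2\rceil=s$ (Proposition~\ref{examples}) gives $\ava(G)\ge s$. In the $K_{s,s}$-subgraph case, let $A=\{a_1,\dots,a_s\}$ and $B=\{b_1,\dots,b_s\}$ be the two sides of the $K_{s,s}$ and consider the induced subgraph $J:=G[A\cup B]=H[A\cup B]$. I would exhibit a complete arboreal coloring of $J$ with $s$ colors directly: taking color classes $V_i:=\{a_i,b_i\}$, each $V_i$ spans at most one edge and hence a forest (as $G$ is simple), while merging any two classes $V_i,V_j$ with $i\ne j$ produces the monochromatic $4$-cycle $a_i b_j a_j b_i$, all of whose edges belong to the $K_{s,s}$. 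Thus no two classes can be merged, so this is a complete arboreal coloring and $\ava(J)\ge s$; by Corollary~\ref{InducedMonotonicity}, $\ava(G)\ge\ava(J)\ge s$. Either way we reach the desired contradiction.

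The step needing the most care is the $K_{s,s}$-subgraph outcome, which a priori guarantees nothing about \emph{induced} structure: the point is precisely that the matching coloring above remains a \emph{complete} arboreal coloring regardless of which further edges lie inside $A$ or inside $B$, because the certifying $4$-cycles use only bipartite edges of the $K_{s,s}$. (Alternatively one could invoke Proposition~\ref{examples}, noting that adding edges inside the two parts of $K_{s,s}$ cannot make any of these merges legal, but the explicit coloring is cleaner and avoids a monotonicity argument that does not hold for $\ava$ under taking subgraphs.)
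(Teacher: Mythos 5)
Your proposal is correct and follows essentially the same route as the paper: invoke Theorem~\ref{Hammer} on an induced subgraph of minimum degree $\deg(G)$, kill the $K_{s,s}$-subgraph outcome with the perfect-matching coloring (whose certifying $4$-cycles use only bipartite edges, exactly as the paper argues), and kill the induced-subdivision outcome via Corollary~\ref{inducedsubdivisions}. The only cosmetic difference is that you take $K=K_{2s}$ with $\ava(K_{2s})=s$ where the paper takes $K=K_{s,s}$ with $\ava(K_{s,s})=s$; both work equally well.
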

\begin{proof}
For any $k \in \mathbb{N}$, define $g(k)$ as the integer $d(s,K)$ from Theorem \ref{Hammer} where $s=k+1$ and $K=K_{k+1,k+1}$.

Now let $G$ be an arbitrary simple graph and let $k:=\ava(G)$. We have to prove that $\deg(G) \leq g(k)$. Assume towards a contradiction that $\deg(G)>g(k)$, i.e., there exists an induced subgraph $H$ of $G$ such that the minimum degree of $H$ is greater than $g(k)=d(k+1,K_{k+1,k+1})$. By Corollary \ref{InducedMonotonicity}, we have $k=\ava(G) \ge \ava(H)$. By Theorem \ref{Hammer}, $H$ either contains $K_{k+1,k+1}$ as a subgraph or as an induced subdivision.
In the first case, let $X \subseteq V(H)$ be the set of vertices of the subgraph. On the one hand, we know that $k \ge \ava(H) \ge \ava(H[X])$. On the other hand, $H[X]$ is a simple graph which contains $K_{k+1,k+1}$ as a spanning subgraph. Consider a perfect matching of $K_{k+1,k+1}$ and color the vertices in $X$ with $k+1$ different colors, such that end vertices of the same matching edge have the same color, and all matching edges are colored differently. It is now easily seen that this defines a complete arboreal coloring of $H[X]$ with more than $k$ colors, which yields the desired contradiction in this case. In the second case we directly apply Corollary \ref{inducedsubdivisions} to obtain the contradiction $k \ge \ava(H) \ge \ava(K_{k+1,k+1})=k+1$.
\end{proof}
The last ingredient of our proof is the following well-known theorem of Erd\H{o}s and P\'{o}sa, which relates the maximum size of a vertex-disjoint cycle packing in a graph to the minimum size of a feedback vertex set. For a graph $G$ let $\nu(G)$ denote the maximal size of a collection of pairwise vertex-disjoint cycles in $G$.
\begin{theorem}[\cite{erdosposa}]\label{epos}
There is an absolute constant $c>0$ such that for every $k \in \mathbb{N}$, every graph $G$ with $\tau(G)>ck\log(k)$ fulfills $\nu(G) \ge k$.
\end{theorem}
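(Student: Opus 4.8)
The plan is to prove the \textbf{contrapositive}: every graph $G$ with $\nu(G) \le k-1$ satisfies $\tau(G) \le ck\log k$ for a suitable absolute constant $c$. First I would reduce to the case of (multi)graphs of minimum degree at least $3$. Starting from $G$, I repeatedly delete vertices of degree at most $1$ and suppress vertices of degree exactly $2$ (replacing the two incident edges by a single edge); applying these cleaning operations exhaustively yields a multigraph $G^{*}$ of minimum degree at least $3$, together with possibly a few components that are single cycles, which I would handle separately since each contributes exactly $1$ to both $\nu$ and $\tau$. A routine check shows that these operations change neither the maximum number of vertex-disjoint cycles nor the minimum feedback vertex set: one has $\nu(G)=\nu(G^{*})$, and any feedback vertex set of $G^{*}$ (whose vertices are genuine vertices of $G$) lifts to one of $G$ of the same size. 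Since trivially $\tau(G^{*}) \le |V(G^{*})|$, the whole theorem reduces to the following counting statement, which I will call the \emph{Key Lemma}: there is an absolute constant $c$ such that every multigraph of minimum degree at least $3$ on more than $ck\log k$ vertices contains $k$ vertex-disjoint cycles.

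The engine behind the Key Lemma is the elementary fact that large minimum degree forces \emph{short} cycles: if $H$ has minimum degree at least $3$ and $n=|V(H)|$ vertices, then $H$ contains a cycle of length at most $2\log_2 n + 1$. I would prove this by a breadth-first-search argument: grow a BFS tree from any vertex, and observe that since every vertex has at least two further neighbours, the number of vertices reachable within distance $r$ grows roughly like $2^{r}$, so within depth $O(\log n)$ two branches must meet or a non-tree edge must appear, closing a short cycle (the Moore bound gives the same conclusion). The crucial numerical point for what follows is that the Key Lemma only needs to treat graphs with $n = O(k\log k)$ vertices, and for such $n$ one has $\log_2 n = O(\log k)$; hence every short cycle produced this way already has length $O(\log k)$ rather than merely $O(\log n)$.

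To establish the Key Lemma I would extract cycles one at a time while maintaining minimum degree at least $3$. Set $H_0 := G^{*}$, and given $H_i$ of minimum degree at least $3$, use the short-cycle lemma to find a cycle $C_i$, add it to the packing, delete $V(C_i)$, and re-run the cleaning operations to obtain $H_{i+1}$ of minimum degree at least $3$; if the process survives $k$ rounds, the cycles $C_0,\dots,C_{k-1}$ are vertex-disjoint and we are done. \textbf{The main obstacle}, and the real content of the Erd\H{o}s--P\'osa bound, is to guarantee survival for $k$ rounds whenever $|V(G^{*})| > ck\log k$. The naive accounting charges $2\log_2|V(H_i)|$ vertices to each cycle, but the genuinely delicate part is controlling the \emph{cascade} of further deletions triggered when re-establishing minimum degree at least $3$ after removing $V(C_i)$, since in principle a single deletion could drop many vertices below degree $3$. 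I would address this by an amortized analysis that bounds the total number of vertices consumed across all rounds—cycle vertices together with cascade vertices—by $O(k\log k)$, combined with the threshold observation from the previous paragraph (so that each short cycle costs only $O(\log k)$) and the option to stop and place \emph{all} remaining vertices into the feedback vertex set once the surviving graph has dropped to $\Theta(k\log k)$ vertices. Making the constants in this amortized cascade bound explicit, so that they match the clean threshold $ck\log k$, is the step on which I expect to spend the most effort.
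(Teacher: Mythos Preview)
The paper does not prove this statement at all: Theorem~\ref{epos} is quoted from \cite{erdosposa} as a black box and used as an ingredient in the proofs of Theorem~\ref{FVlowerbound} and Theorem~\ref{minor-closed}. There is therefore no ``paper's own proof'' to compare your proposal against.

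That said, your outline is the classical Erd\H{o}s--P\'osa/Simonovits strategy (reduce to minimum degree at least $3$, then iteratively strip off a shortest cycle of length $O(\log n)$ and re-clean), and you have correctly located the only genuinely delicate step, namely bounding the total number of vertices consumed by the cascades. Two remarks on that step. First, be careful to distinguish \emph{suppressed} degree-$2$ vertices from \emph{deleted} degree-$\le 1$ vertices: suppressed vertices live on inside multigraph edges and are recovered when you lift the packed cycles $C_i$ back to $G$, so they do not need to be charged against your $ck\log k$ budget in the same way. Second, suppressions can create parallel edges and loops, which are themselves cycles (of length $2$ and $1$) available for the packing; exploiting this is one standard way to short-circuit a long cascade. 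With these two observations the amortized bound goes through, but as written your proposal leaves precisely this bookkeeping as a promise rather than an argument, so in its current form it is a correct plan rather than a complete proof.
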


We are now prepared for the proof of Theorem \ref{FVlowerbound}.
\begin{proof}[Proof of Theorem \ref{FVlowerbound}]
For a clearer presentation, we prove the theorem by contradiction. From a finer analysis of the proof, one could derive an explicit expression for $f(k)$, however, the bound would be rather bad. So assume for the rest of the proof that such a function $f$ as claimed does not exist. This means that there is a fixed $A \in \mathbb{N}$ and an infinite sequence $(G_s)_{s=1}^{\infty}$ of simple graphs such that $\ava(G_s)<A$ for all $s \in \mathbb{N}$ but $\tau(G_s) \rightarrow \infty$. From Theorem \ref{epos} we directly conclude that also $\nu(G_s) \rightarrow \infty$.

From Corollary \ref{degeneracy} we get that there exists a constant $d:=\max_{l=1,\ldots,A-1}{g(l)}>0$ such that all the graphs $G_s$ are $d$-degenerate. 

For each $s \ge 1$, we fix a packing $\mathcal{C}_s$ of induced (that is, chordless) and pairwise vertex-disjoint cycles in $G_s$ of size $\nu(G_s)$. 

For each $s \ge 1$, we associate with $\mathcal{C}_s$ a model graph $M_s$ which has $|\mathcal{C}_s|$ vertices, one for each cycle in $\mathcal{C}_s$, and an edge between two vertices for every edge spanned between the corresponding cycles in $G_s$ (so this might be a multi-graph). Because the cycles were assumed to be induced, we know that $M_s$ is loopless.
\paragraph{Claim: 
$\alpha(M_s) < \binom{A}{2}$ for all $ s\ge 1$.}
\begin{proof}
Assume towards a contradiction the statement was false. Consequently, we can find some $s \ge 1$ such that $M_s$ contains an independent set $I$ of size $\binom{A}{2}$. Let $H$ be the subgraph of $G_s$ induced by the union of the vertex sets of cycles in $\mathcal{C}_s$ corresponding to the vertices in $I$. Consider some bijection which maps each pair $\{i,j\} \in \binom{[A]}{2}$ to one of the $\binom{A}{2}$ cycles corresponding to $I$. 

We now define a vertex-coloring of $H$ as follows: For each cycle associated with the pair $\{i,j\}$, we partition its vertex set into two non-trivial subsets (say induced paths), one of which gets color $i$, while the other gets color $j$. We claim that this defines a complete arboreal coloring of $H$: First of all, every color class induces a forest on each of the cycles it appears on, and since $I$ was independent, there are no edges between the considered cycles which could create monochromatic cycles. Moreover, for each pair $i,j \in [A]$ of colors, the union of the corresponding color classes contains a cycle, namely the one with label $\{i,j\}$. This proves $A>\ava(G_s) \ge \ava(H) \ge A$, which is the desired contradiction.
\end{proof}
Applying Ramsey's Theorem to each of the graphs $M_s,  s \ge 1$, we find that $$R\left(\omega(M_s)+1,\binom{A}{2}\right)>|V(M_s)|=\nu(G_s) \rightarrow \infty,$$
where for any $r,b \in \mathbb{N}, r, b \ge 1$, $R(r,b)$ denotes the well-known Ramsey number. Therefore, the size $\omega(M_s)$ of a maximum clique in $M_s$ tends to infinity for $s \rightarrow \infty$. For each $s \ge 1$, consider a clique $W_s$ in $M_s$ of maximum size and let $G_s'$ be the subgraph of $G_s$ induced by the vertices contained in the cycles corresponding to the vertices $W_s$ of $M_s$. Clearly, the sub-collection $\mathcal{C}_s'$ of $\mathcal{C}_s$ corresponding to $W_s$ defines a decomposition of $G_s'$ in induced vertex-disjoint cycles of size $|\mathcal{C}_s'|=\omega(W_s) \rightarrow \infty$ in $G_s'$. Moreover, we have $\ava(G_s') \leq \ava(G_s)<A$ and $\deg(G_s') \leq \deg(G_s) \leq d$ for all $s \in \mathbb{N}$. In the following, we will continue working with the sequence $(G_s')_{s=1}^{\infty}$ of simple graphs. 

For a fixed $s \ge 1$ consider the graph $G_s'$ with the cycle-decomposition $\mathcal{C}_s'=\{C_1,\ldots,C_k\}$. By the definition of $\mathcal{C}_s'$, for every pair $C_j,C_l$ of cycles, there is an edge $e_{jl} \in E(G_s')$ with endpoints in $V(C_j)$ and $V(C_l)$. 
\paragraph{Claim: There are less than $R:=R(2A,A)$ cycles $C \in \mathcal{C}_s'$ with $|V(C)| \ge R$.
}
\begin{proof}
Assume towards a contradiction that there were at least $R$ cycles in $\mathcal{C}_s'$ with at least $R$ vertices each, say $C_1,\ldots,C_{R}$. For each $i \in [R]$, we can find a vertex $v_i \in V(C_i)$ which is not incident to any of the edges $\{e_{jl}| j, l \in [R]\}$. Let $X:=\bigcup_{i=1}^{R}{(V(C_i) \setminus \{v_i\})}$ and consider the induced subgraph $G_s'[X]$. For every $i$, let $P_i:=C_i-v_i$. $P_1,\ldots,P_R$ defines a vertex-partition of $G_s'[X]$ into induced paths. Let $M_X$ be the model (multi-)graph on $R$ vertices obtained from $G_s'[X]$ by identifying each of $P_1, \ldots, P_{R}$ into a single vertex. By Corollary \ref{InducedMonotonicity} and Lemma \ref{treecontraction}, we know that $\ava(M_X) \leq \ava(G_s'[X]) \leq \ava(G_s')<A$. Because all the edges $e_{jl}, 1 \leq j<l \leq R$ still exist in $G_s'[X]$, we know that the vertices of $M_X$ are mutually adjacent. Now define a $2$-coloring of the pairs of vertices of $M_X$ where a pair is colored blue if there is a simple edge between the corresponding vertices and red if there are at least two parallel edges between the corresponding vertices. By Ramsey's theorem, we can find $2A$ vertices in $M_X$ spanning a blue clique or $A$ vertices spanning a red clique. In the first case, we directly have the contradiction $A>\ava(M_X) \ge \ava(K_{2A})=A$, as desired, while in the second case, we find an induced subgraph of $M_X$ on $A$ vertices in which each pair of vertices is connected by a bigon. This subgraph has a-vertex arboricity $A$ as well (color each vertex with a different color), which yields the contradiction also in this case.
\end{proof}
For each $s \ge 1$, consider the subset $\mathcal{C}_s^R \subseteq \mathcal{C}_s'$ of cycles of length less than $R$, and consider the induced subgraph $H_s$ of $G_s'$ with vertex set $\bigcup_{C \in \mathcal{C}_s^R}{V(C)}$. For each $s \ge 1$, define $k_s:=|\mathcal{C}_s^R| > |\mathcal{C}_s'|-R$. By the above, we have $k_s \rightarrow \infty$ for $s \rightarrow \infty$. Because $G_s'$ is $d$-degenerate, so is $H_s$, and therefore we have $|E(H_s)| \leq dn_s$, where $n_s$ is the number of vertices of $H_s$. By definition, we have $n_s=\sum_{C \in \mathcal{C}_s^R}{|V(C)|} \leq Rk_s$. On the other hand, all the distinct edges $e_{jl}$ with $C_j, C_l \in \mathcal{C}_s^R$ are contained in $E(H_s)$, and so we get the estimate
$$\binom{k_s}{2} \leq |E(H_s)| \leq dRk_s$$ for all $s \ge 1$. This clearly contradicts the fact that $k_s$ can grow arbitrarily large. This concludes the proof of the theorem.
\end{proof}
The given examples for digraphs and multi-graphs with small complete coloring parameters but without small feedback vertex sets are based on very dense (di)graphs. However, for many investigations, \emph{minor-closed} classes of graphs such as planar graphs, which are rather sparse, are also important. In the following we show that for orientations of graphs in a fixed non-trivial minor-closed class, also for digraphs it is possible to establish an upper bound on the feedback vertex set in terms of the adichromatic number. Moreover, we give more explicit bounds for undirected graphs within such a class. 

To prove the first part of the next Theorem, we need a directed version of Theorem \ref{epos}. This result is not trivial at all. Before its resolution in \cite{Reed1996}, it was known as Younger's Conjecture. No good (polynomial) upper bounds on the function $g$ are known yet. Again, $\nu(D)$ denotes the maximal size of a collection of pairwise vertex-disjoint directed cycles in $D$. 
\begin{theorem}[\cite{Reed1996}] \label{younger}
There exists a function $g:\mathbb{N} \rightarrow \mathbb{N}$ such that for any digraph $D$, we have 
$$\tau(D) \leq g(\nu(D)).$$
\end{theorem}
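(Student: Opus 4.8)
The plan is to argue by induction on $\nu:=\nu(D)$, the maximum number of pairwise vertex-disjoint directed cycles in $D$. The base case $\nu=0$ is immediate: then $D$ is acyclic, $\tau(D)=0$, and $g(0)=0$ works. For the inductive step fix $g(0),\dots,g(\nu-1)$ and take $D$ with $\nu(D)=\nu\ge 1$. The naive attempt is to pick a directed cycle $C$ in $D$ and hope that $\nu(D-V(C))\le\nu-1$; if so, induction gives a feedback vertex set $F$ of $D-V(C)$ with $|F|\le g(\nu-1)$, and $F\cup V(C)$ hits every directed cycle of $D$. This fails for two reasons, and circumventing both of them is essentially the whole theorem. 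First, $|V(C)|$ is not bounded in terms of $\nu$, so one cannot afford to delete a whole long cycle; one would instead want a directed cycle of bounded length, or — when no short cycle exists — an argument that only boundedly many vertices of $D$ lie on any short directed cycle while the long ones can be destroyed by rerouting or contraction. Second, for an arbitrary choice of $C$ the quantity $\nu(D-V(C))$ need not drop below $\nu$ at all: there is in general no single directed cycle whose removal necessarily decreases the packing number, so one has to pass to a minimal counterexample and choose $C$, or a bounded ``core'' structure, extremally with respect to a well-chosen potential.

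The step I expect to be the real obstacle is the remaining case, in which after these reductions $D$ may be taken highly connected, with no short directed cycle and with every two directed cycles overlapping heavily, so that neither naive reduction is available. Here one must extract from $D$ a large, well-organized system of pairwise disjoint directed paths between a bounded ``frontier'' — a \emph{web} arranged in an essentially planar pattern — and then argue dichotomously: either the web already contains $\nu$ pairwise disjoint directed cycles, contradicting $\nu(D)=\nu$, or every directed cycle of $D$ is forced through a bounded portion of the web, which is then the desired bounded feedback vertex set. This dichotomy, and the digraph structure theory that underlies it, is exactly the content of the Reed--Robertson--Seymour--Thomas resolution of Younger's conjecture; it is long and technical, and (as already noted) produces only very weak, super-polynomial bounds on $g$.

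Consequently, the sensible plan for the present paper is to invoke Theorem~\ref{younger} as a black box and to put the actual work into how it combines with the earlier tools of this section to bound $\tau(D)$ by a function of $\adi(D)$ for orientations of members of a fixed non-trivial minor-closed class; the absence of a good handle on $g$ is precisely why the resulting statement must remain existential, with no explicit bound on the feedback vertex set.
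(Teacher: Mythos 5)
The paper gives no proof of this statement: it is quoted verbatim as an external theorem of \cite{Reed1996} (the resolution of Younger's conjecture), and your conclusion that it must be invoked as a black box is exactly what the paper does. Your sketch of why the naive induction on $\nu(D)$ fails and of the web-based dichotomy in the highly connected case is a fair high-level description of the actual Reed--Robertson--Seymour--Thomas argument, but it is not (and does not claim to be) a self-contained proof, so there is nothing further to compare.
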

\begin{theorem} \label{minor-closed}
Let $\mathcal{G}$ be a minor-closed class of simple graphs which is non-trivial (that is, it does not contain all graphs).
\begin{enumerate}
\item There is a function $f: \mathbb{N} \rightarrow \mathbb{N}$ (depending on $\mathcal{G}$), such that for any digraph $D$ whose simple underlying graph (obtained from ignoring parallel edges) is contained in $\mathcal{G}$, we have
$$\tau(D) \leq f(\adi(D)).$$
\item There is a constant $C>0$ (depending on $\mathcal{G}$) such that for every graph $G$ whose simplification (identifying parallel edges) lies in $\mathcal{G}$, we have
$$\tau(G) \leq C \cdot \ava(G)^2 \log(\ava(G))$$
\end{enumerate} 
\end{theorem}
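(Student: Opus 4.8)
The plan is to handle both parts by one scheme: bound the largest number $\nu$ of pairwise vertex-disjoint directed cycles (resp.\ cycles) by a function of $\adi$ (resp.\ of $\ava$), and then pass from $\nu$ to $\tau$ via an Erd\H{o}s--P\'osa type theorem --- Theorem~\ref{younger} for part~1 and the quantitative Theorem~\ref{epos} for part~2. The link between cycle packings and the $a$-parameters will be Proposition~\ref{examples}.4 (disjoint unions of $\binom{n}{2}$ cycles have $a$-vertex arboricity / adichromatic number $n$) together with the induced-subgraph monotonicity of Corollary~\ref{InducedMonotonicity}. The only use of the minor-closed hypothesis is a degeneracy bound: since $\mathcal{G}$ is non-trivial it misses some $K_s$ as a minor, hence by the classical extremal bounds for minors (Mader, Kostochka, Thomason) every graph in $\mathcal{G}$ is $t$-degenerate for a constant $t=t(\mathcal{G})$; in particular a graph in $\mathcal{G}$ on $N$ vertices has an independent set of size at least $N/(t+1)$.

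For part~1, I would take a digraph $D$ whose underlying simple graph $G$ lies in $\mathcal{G}$, fix a largest packing $C_1,\dots,C_N$ of vertex-disjoint directed cycles ($N=\nu(D)$), and refine each $C_i$ to a shortest directed cycle of $D[V(C_i)]$, so that each $C_i$ is an induced subdigraph (up to parallel arcs, which are harmless). Form the simple model graph $M$ on $\{C_1,\dots,C_N\}$ with $C_iC_j\in E(M)$ iff $D$ has an arc between $V(C_i)$ and $V(C_j)$; then $M$ is a minor of $G$ (delete the vertices outside $\bigcup_iV(C_i)$, contract each $V(C_i)$), hence $M\in\mathcal{G}$ and $\alpha(M)\ge N/(t+1)$. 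If $N/(t+1)\ge\binom{\adi(D)+1}{2}$, an independent set of that size would give $\binom{\adi(D)+1}{2}$ induced directed cycles pairwise joined by no arc, whose union induces in $D$ a disjoint union of $\binom{\adi(D)+1}{2}$ directed cycles; by Corollary~\ref{InducedMonotonicity} and Proposition~\ref{examples}.4 this forces $\adi(D)\ge\adi(D)+1$, impossible. Hence $\nu(D)<(t+1)\binom{\adi(D)+1}{2}$, and Theorem~\ref{younger} (with $g$ non-decreasing) gives $\tau(D)\le g\!\left((t+1)\binom{\adi(D)+1}{2}\right)=:f(\adi(D))$.

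For part~2 I would run the same argument but keep track of constants. One may assume $\ava(G)\ge 2$ (if $\ava(G)\le 1$ then $G$ is a forest and $\tau(G)=0$). Set $k:=(t+1)\binom{\ava(G)+1}{2}$; since $\binom{\ava(G)+1}{2}\le\ava(G)^2$ we have $k\le(t+1)\ava(G)^2$, so $ck\log k\le c(t+1)\ava(G)^2\bigl(\log(t+1)+2\log\ava(G)\bigr)\le C\,\ava(G)^2\log\ava(G)$ for a constant $C=C(\mathcal{G})$ (here $c$ is the absolute constant of Theorem~\ref{epos} and we use $\log\ava(G)\ge\log 2$). Thus if $\tau(G)>C\,\ava(G)^2\log\ava(G)$ then $\tau(G)>ck\log k$, so $\nu(G)\ge k$ by Theorem~\ref{epos}; refining to $k$ vertex-disjoint shortest cycles and taking their model graph $M$ --- a minor of the simplification of $G$, hence in $\mathcal{G}$ and $t$-degenerate --- yields $\alpha(M)\ge k/(t+1)=\binom{\ava(G)+1}{2}$, and an independent set of that many cycles induces in $G$ a disjoint union of $\binom{\ava(G)+1}{2}$ cycles, which has $a$-vertex arboricity $\ava(G)+1$ by Proposition~\ref{examples}.4, contradicting Corollary~\ref{InducedMonotonicity}. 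This proves $\tau(G)\le C\,\ava(G)^2\log\ava(G)$.

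The main obstacle I expect is the constant bookkeeping in part~2, i.e.\ making sure that $t(\mathcal{G})$ and the Erd\H{o}s--P\'osa constant combine so that the final bound really is $C\,\ava^2\log\ava$ with $C$ depending only on $\mathcal{G}$. On the structural side, two small points need a word: that the model graph is genuinely a \emph{minor} of a member of $\mathcal{G}$ (so we really use minor-closedness, not merely closure under subgraphs), and that passing to shortest cycles is what makes the chosen non-adjacent subfamily induce an honest disjoint union of cycles --- with the caveat that a length-$2$ gadget may carry extra parallel edges, which does no harm to the coloring construction behind Proposition~\ref{examples}.4. For part~1 the bound is inexplicit only because Theorem~\ref{younger} provides no explicit $g$.
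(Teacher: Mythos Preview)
Your proposal is correct and follows essentially the same route as the paper: bound $\nu$ in terms of $\adi$ (resp.\ $\ava$) by forming the model graph on a maximum cycle packing, observing it is a minor of a member of $\mathcal{G}$ and hence has a large independent set, and then invoking Proposition~\ref{examples}.4 together with Corollary~\ref{InducedMonotonicity}; finally pass from $\nu$ to $\tau$ via Theorem~\ref{younger} (part~1) or Theorem~\ref{epos} (part~2). The only cosmetic differences are that the paper phrases the independence-number bound via $\chi(G)\le d$ rather than directly via degeneracy, and argues part~2 in the forward direction instead of by contradiction; your remark on refining to shortest cycles to make them induced (and the harmless parallel-arc caveat) is a point the paper simply subsumes under ``w.l.o.g.\ induced''.
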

\begin{proof}
We start by noting that the graphs in $\mathcal{G}$ have bounded chromatic number: Since $\mathcal{G}$ is non-trivial, there is a graph $H$ which is a forbidden minor for all members of of $\mathcal{G}$. Therefore, all graphs in $\mathcal{G}$ are $K_{|V(H)|}$-minor free. By a classical result of Mader (\cite{mader67}), these graphs have bounded degeneracy and therefore bounded chromatic number. Let in the following $d>0$ denote a constant such that $\chi(G) \leq d$ for all $G \in \mathcal{G}$. It follows from the estimate $\frac{|V(G)|}{\alpha(G)} \leq \chi(G)$ that $\alpha(G) \ge \frac{1}{d}n$ for all $G \in \mathcal{G}$ on $n$ vertices.
\begin{enumerate}
\item Let $g:\mathbb{N} \rightarrow \mathbb{N}$ be the function from Theorem \ref{younger}, and let $D$ be a given digraph whose simplified underlying graph is in $\mathcal{G}$. Let $\nu(D)=k$ and let $\{C_1,\ldots, C_k\}$ be an optimal collection of vertex-disjoint and w.l.o.g induced directed cycles. Consider the simple model-graph $M$ which has $k$ vertices, one for each cycle $C_i$, and an edge between two vertices if the corresponding cycles are connected by an edge. Because the cycles $C_i$ are all induced, $M$ is obtained from the simplified underlying graph of $D$ by first deleting all the vertices not on one of the cycles and then contracting the cycles into vertices. These are graph minor operations, and therefore we have $M \in \mathcal{G}$. We conclude that $\alpha(M) \ge \frac{1}{d}k$. Let $I \subseteq V(M)$ be an independent set of size $|I| \ge \frac{1}{d}k$ in $M$ and consider the subdigraph $D'$ of $D$ induced by the union of the vertex sets of cycles $C_i \in \mathcal{C}$ corresponding to vertices in $I$. We know that $D'$ is the disjoint union of at least $\frac{1}{d}k$ directed cycles. By Proposition \ref{examples} and Corollary \ref{InducedMonotonicity}, we have that $\frac{1}{d}k < \binom{\adi(D')+1}{2} \leq \binom{\adi(D)+1}{2}$. We finally conclude that (assuming $g$ to be monotone)
$$\tau(D) \leq g(\nu(D)) \leq g\left(d\binom{\adi(D)+1}{2}\right)=:f(\adi(D)),$$ which proves the claim.
\item The proof works completely analogous to the undirected case, and we obtain the estimate 
$$\nu(G)<d\binom{\ava(G)+1}{2} \leq d \cdot \ava(G)^2.$$ Finally, this implies using Theorem \ref{epos} that
$$\tau(G) \leq c \cdot \nu(G)\log(\nu(G)) \leq C \cdot \ava(G)^2 \log(\ava(G)),$$ where $C>0$ is a constant which only depends on $\mathcal{G}$. 
\end{enumerate}
\end{proof}

\subsection{Interplay of $\ava$ and $\adi$}\label{subsec:relations}

As a direct consequence of Theorem \ref{FVlowerbound} we can prove a one-sided relationship between the adichromatic number of a simple digraph and the a-vertex arboricity of its underlying graph.
\begin{corollary}\label{interplay}
There exists a function $h_1:\mathbb{N} \rightarrow \mathbb{N}$ such that for any simple digraph $D$ with underlying graph $G$, we have
$$\adi(D) \leq h_1(\ava(G)).$$
\end{corollary}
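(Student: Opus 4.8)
The plan is to reduce the statement about the adichromatic number of an orientation $D$ to the already-established bound on $\tau(G)$ in terms of $\ava(G)$ (Theorem~\ref{FVlowerbound}), and then combine this with the easy upper bound $\adi(D)\leq\tau(D)+1$ from Proposition~\ref{FVupperbound}. The key observation is that any feedback vertex set of the underlying simple graph $G$ is also a feedback vertex set of the digraph $D$: if deleting $F\subseteq V(G)$ leaves a forest in $G$, then deleting $F$ from $D$ leaves an orientation of a forest, which is acyclic as a digraph (indeed, every directed cycle in $D$ projects to a cycle in $G$). Hence $\tau(D)\leq\tau(G)$ for any orientation $D$ of $G$.

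With this in hand the proof is immediate. First I would note $\adi(D)\leq\tau(D)+1$ by Proposition~\ref{FVupperbound}. Then, using $\tau(D)\leq\tau(G)$, I get $\adi(D)\leq\tau(G)+1$. Finally, applying Theorem~\ref{FVlowerbound}, which gives a function $f$ with $\tau(G)\leq f(\ava(G))$ for every simple graph $G$, I obtain $\adi(D)\leq f(\ava(G))+1$. So the claimed function can be taken to be $h_1(k):=f(k)+1$.

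There is essentially no obstacle here, since all the substantive work is done in Theorem~\ref{FVlowerbound}; the only point requiring a sentence of justification is the inequality $\tau(D)\leq\tau(G)$, i.e.\ that a feedback vertex set of the underlying undirected graph is also a feedback vertex set of the digraph. One should be slightly careful to state that $G$ is the underlying \emph{simple} graph (so that $F$ being a feedback vertex set in the graph-theoretic sense is meaningful and Theorem~\ref{FVlowerbound} applies), but since $D$ is assumed to be a simple digraph this is exactly the setting of the corollary's hypothesis. I would phrase the argument so that it also makes transparent the remark (made later in the paper) that a similar chain of inequalities, together with Theorem~\ref{minor-closed}, yields analogous bounds within minor-closed classes.

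\begin{proof}
Let $D$ be a simple digraph with underlying simple graph $G$, and let $F\subseteq V(G)$ be a feedback vertex set of $G$ of minimum size $\tau(G)$. Then $G-F$ is a forest, so $D-F$ is an orientation of a forest; since every directed cycle of $D-F$ would project onto a cycle of $G-F$, the digraph $D-F$ is acyclic. Hence $F$ is also a feedback vertex set of $D$, and $\tau(D)\leq\tau(G)$. Combining this with Proposition~\ref{FVupperbound} and Theorem~\ref{FVlowerbound}, we obtain
$$\adi(D)\leq\tau(D)+1\leq\tau(G)+1\leq f(\ava(G))+1,$$
where $f$ is the function from Theorem~\ref{FVlowerbound}. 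Thus the claim holds with $h_1(k):=f(k)+1$.
\end{proof}
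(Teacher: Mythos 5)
Your proof is correct and follows exactly the same chain of inequalities as the paper, namely $\adi(D)\leq\tau(D)+1\leq\tau(G)+1\leq f(\ava(G))+1$ via Proposition~\ref{FVupperbound} and Theorem~\ref{FVlowerbound}; your explicit justification of $\tau(D)\leq\tau(G)$ is a welcome detail the paper leaves implicit.
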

\begin{proof}
This follows directly from Proposition \ref{FVupperbound} (applied to $D$) and Theorem \ref{FVlowerbound}, as we have
$$\adi(D) \leq \tau(D)+1 \leq \tau(G)+1 \leq f(\ava(G))+1.$$
\end{proof}

The above estimate cannot be reversed when looking at a fixed digraph (consider acyclic digraphs). However, if a graph has large a-vertex arboricity, it is possible to find an orientation of $G$ with large adichromatic number.
\begin{proposition}\label{outerplay}
There is a function $h_2:\mathbb{N} \rightarrow \mathbb{N}$ such that 
$$\ava(G) \leq h_2\left(\max_{D \in \mathcal{O}(G)}\adi(D)\right)$$ for every graph $G$, where $\mathcal{O}(G)$ denotes the set of digraphs whose underlying graph is $G$.
\end{proposition}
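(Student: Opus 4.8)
The plan is to prove the contrapositive in quantitative form: if $\ava(G)$ is large, then \emph{some} orientation of $G$ has large $\adi$. First I would turn ``large a-vertex arboricity'' into ``many vertex-disjoint cycles''. By Proposition~\ref{FVupperbound} we have $\tau(G)\ge\ava(G)-1$, so by Theorem~\ref{epos} (Erd\H{o}s--P\'osa) $G$ contains a packing of $\nu(G)\ge m$ pairwise vertex-disjoint cycles, where $m=m(\ava(G))$ is a fixed nondecreasing function with $m(k)\to\infty$. Replacing each cycle of the packing by a chordless cycle on a subset of its vertices keeps the cycles vertex-disjoint, so we may assume every cycle $C$ of the packing is induced, i.e.\ $G[V(C)]=C$. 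Keeping $\binom{n}{2}$ of them, where $n=n(\ava(G))\to\infty$ is maximal with $\binom{n}{2}\le m$, we obtain induced, pairwise vertex-disjoint cycles $\{C_{ab}:\{a,b\}\in\binom{[n]}{2}\}$ in $G$, indexed by the $2$-subsets of $[n]$.

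Next I would orient $G$. Let $X=\bigcup_{\{a,b\}}V(C_{ab})$ and fix any linear order on the index pairs. Orient every edge inside a cycle $C_{ab}$ so that $C_{ab}$ becomes a directed cycle; orient every edge of $G[X]$ joining two distinct cycles from the earlier to the later cycle in the fixed order; orient all edges meeting $V(G)\setminus X$ arbitrarily. Call the result $D\in\mathcal{O}(G)$. The key point is that the only directed cycles of $D[X]$ are the $C_{ab}$: the sequence of cycle-blocks visited by a directed cycle of $D[X]$ strictly increases in the fixed order at every transition between distinct blocks, so a directed cycle using an inter-cycle edge can never close up; hence it stays inside a single $V(C_{ab})$, where the only directed cycle is $C_{ab}$ itself since $C_{ab}$ is induced.

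Finally I would color $D[X]$ as in Proposition~\ref{examples}(4): for each pair $\{a,b\}$ split $V(C_{ab})$ into two nonempty arcs and give one arc color $a$, the other color $b$; this colors each vertex of $X$ exactly once with a color in $[n]$. Every color class is a vertex-disjoint union of proper arcs of cycles, hence contains $V(C_{ab})$ for no pair, hence (by the key point) induces an acyclic subdigraph; and for every pair $\{a,b\}$ the union of color classes $a$ and $b$ contains $V(C_{ab})$, hence a directed cycle. So $D[X]$ has a complete acyclic coloring with $n$ colors, and Corollary~\ref{InducedMonotonicity} gives $\adi(D)\ge\adi(D[X])\ge n=n(\ava(G))$. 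Since $n(\cdot)$ is nondecreasing and unbounded, letting $h_2(t):=\max\big(\{1\}\cup\{k\in\NN:n(k)\le t\}\big)$ yields $\ava(G)\le h_2\big(\max_{D\in\mathcal{O}(G)}\adi(D)\big)$.

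The main obstacle is the orientation step --- orienting $G$ so that $G[X]$ gains no directed cycles beyond the packing cycles --- which is precisely what the linear order on the blocks arranges; once $D[X]$ is controlled, the coloring is the disjoint-cycles construction of Proposition~\ref{examples}(4). Note that the $\binom{k}{2}$ witness cycles of a complete arboreal coloring cannot be used directly here, since they need not be vertex-disjoint, which is what forces the detour through $\tau$ and Theorem~\ref{epos}. The resulting function $h_2$ is far from optimal (a logarithmic loss from Erd\H{o}s--P\'osa and a square-root loss in passing from cycles to colors), but this is harmless as only the existence of $h_2$ is asserted.
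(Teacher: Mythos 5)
Your proof is correct and follows essentially the same route as the paper: pass from $\ava$ to $\tau$ to a packing of $\binom{n}{2}$ induced disjoint cycles via Erd\H{o}s--P\'osa, orient each cycle as a directed cycle with all inter-cycle edges respecting a fixed linear order, and extract $n$ colors by splitting each cycle into two arcs as in Proposition~\ref{examples}(4). The only cosmetic difference is that you verify directly that the ordered orientation creates no new directed cycles on the packing vertices, whereas the paper invokes Observation~\ref{dirsep} to delete the inter-cycle edges and then cites Proposition~\ref{examples}(4); your explicit restriction to $D[X]$ via Corollary~\ref{InducedMonotonicity} is, if anything, slightly more careful about vertices outside the packing.
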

\begin{proof}
Because $\ava(G)$ is bounded in terms of $\tau(G)$, which again (by Theorem \ref{epos}) is upper bounded by a function of $\nu(G)$, it suffices to prove that graphs with a sufficiently large cycle packing have an orientation with large adichromatic number. So let $k \in \mathbb{N}$ be arbitrary and let $G$ be a graph with $\nu(G) \ge \binom{k}{2}$. Consider a cycle packing $\mathcal{C}=\left\{C_1,\ldots,C_{\binom{k}{2}}\right\}$ of induced cycles and let $D$ be the orientation of $G$ in which all edges with endpoints in $V(C_i)$ and $V(C_j)$ are oriented towards $C_j$ if $i<j$, and where all cycles $C_i$ are made directed. Because each pair of cycles is separated by a directed edge-cut in $D$, we conclude from Observation \ref{dirsep} that $\adi(D)=\adi(D')$, where $D'$ is obtained from $D$ by deleting all edges between the cycles. As $D'$ is the disjoint union of $\binom{k}{2}$ directed cycles, we conclude from Proposition \ref{examples} that $\adi(D') \ge k$. As $k$ was arbitrary, this proves the claim.
\end{proof}

\section{Discussion}\label{sec:discuss}

In this final section we want to touch upon two topics. First, we take a more general point of view by relating our investigations to the Erd\H{o}s-P\'osa property and introducing the novel notion of $\tau$-boundedness. Second, we comment on the behavior of the parameters with respect to randomness. 

\subsection{Erd\H{o}s-P\'osa and $\tau$-boundedness}\label{subsec:erdosposa}
Many of the results in this paper fit into the following more general setting. Let $\mathcal{H}$ be a class of \emph{guest (di)graphs}, then for a \emph{host (di)graph} define $\tau_{\mathcal{H}}(G)$ to be the size of a minimum $F\subseteq V$ such that $G-F$ is $\mathcal{H}$-free, that is, it contains no element of $\mathcal{H}$ as an induced sub(di)graph. Moreover, denote by $\nu_{\mathcal{H}}(G)$ the size of a largest packing of elements of $\mathcal{H}$ as induced sub(di)graphs in $G$. While clearly $\nu_{\mathcal{H}}(G)\leq \tau_{\mathcal{H}}(G)$ for all (di)graphs $G$, one says that a host class $\mathcal{G}$ has the \emph{Erd\H{o}s-P\'osa property} with respect to $\mathcal{H}$ if there is a function $f$ such that $\tau_{\mathcal{H}}(G)\leq f(\nu_{\mathcal{H}}(G))$ for all $G\in \mathcal{G}$. Define an \emph{$\mathcal{H}$-coloring} of $G$ to be a partition of $V(G)$ into sets that induce $\mathcal{H}$-free (di)graphs. Call an $\mathcal{H}$-coloring \emph{complete} if the union of any two color classes contains a member of $\mathcal{H}$ as induced sub(di)graph. The \emph{$\mathcal{H}$-chromatic number} $\chi_{\mathcal{H}}(G)$ and the \emph{$\mathcal{H}$-achromatic number} $\Psi_{\mathcal{H}}(G)$ are the smallest respectively the largest number of colors that can be used in a complete $\mathcal{H}$-coloring of $G$. With a completely analogous proof to Theorem~\ref{interpolation} we get an interpolation theorem.

\begin{theorem}\label{geninterpolation}
Let $G$ be a (di)graph and let $\ell \in \mathbb{N}$. Then there exists a complete $\mathcal{H}$-coloring of $G$ using exactly $\ell$ colors if and only if $\chi_{\mathcal{H}}(G)\leq\ell\leq \Psi_{\mathcal{H}}(G)$.
\end{theorem}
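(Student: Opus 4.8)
The plan is to mimic the proof of Theorem~\ref{interpolation} verbatim, simply replacing ``acyclic'' by ``$\mathcal{H}$-free'' throughout. First I would note that the condition $\chi_{\mathcal{H}}(G)\leq\ell\leq\Psi_{\mathcal{H}}(G)$ is necessary: a complete $\mathcal{H}$-coloring is in particular an $\mathcal{H}$-coloring, so it uses at least $\chi_{\mathcal{H}}(G)$ colors, and by the definition of $\Psi_{\mathcal{H}}$ it uses at most $\Psi_{\mathcal{H}}(G)$ colors. For sufficiency, fix $\ell$ in this range. I would introduce the analogue of the $\mathcal{P}$-constrained parameter: for a partition $\mathcal{P}=\{P_1,\dots,P_t\}$ of $V(G)$ into $\mathcal{H}$-free parts, let $\chi_{\mathcal{H},\mathcal{P}}(G)$ be the least number of colors in an $\mathcal{H}$-coloring of $G$ in which every $P_i$ is monochromatic.

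The core of the argument is the analogue of Lemma~\ref{change}: if $\mathcal{P}=\{P_1,\dots,P_t\}$ has $\mathcal{H}$-free parts and moreover $G[P_1\cup P_2]$ is $\mathcal{H}$-free, then with $\mathcal{Q}:=\{P_1\cup P_2,P_3,\dots,P_t\}$ one has $\chi_{\mathcal{H},\mathcal{P}}(G)\leq\chi_{\mathcal{H},\mathcal{Q}}(G)\leq\chi_{\mathcal{H},\mathcal{P}}(G)+1$. The first inequality holds because any coloring compatible with $\mathcal{Q}$ is compatible with the finer partition $\mathcal{P}$; the second because, starting from an optimal $\mathcal{P}$-compatible coloring, one can recolor all of $P_1\cup P_2$ with a single fresh color, obtaining a valid $\mathcal{H}$-coloring (the recolored part is $\mathcal{H}$-free by hypothesis) that is $\mathcal{Q}$-compatible and uses at most one extra color. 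Here the only property of ``$\mathcal{H}$-free'' being used is that it is a property of induced subgraphs, which is built into the definition of an $\mathcal{H}$-coloring, so this step is genuinely routine.

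Given this lemma, I would finish exactly as in Theorem~\ref{interpolation}: let $\mathcal{P}_0$ be the partition of $V(G)$ into singletons and $\mathcal{P}_r=\mathcal{P}$ be the partition into the $\Psi_{\mathcal{H}}(G)$ color classes of an optimal complete $\mathcal{H}$-coloring. One checks $\chi_{\mathcal{H},\mathcal{P}_0}(G)=\chi_{\mathcal{H}}(G)$ and $\chi_{\mathcal{H},\mathcal{P}_r}(G)=\Psi_{\mathcal{H}}(G)$ (the latter because distinct classes of a complete coloring must be colored differently, their union containing a member of $\mathcal{H}$). Choose a sequence $\mathcal{P}_0,\mathcal{P}_1,\dots,\mathcal{P}_r$ where each $\mathcal{P}_{i+1}$ arises from $\mathcal{P}_i$ by merging two parts (obtained by splitting classes of $\mathcal{P}$ down to singletons). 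The lemma gives $\chi_{\mathcal{H},\mathcal{P}_i}(G)\leq\chi_{\mathcal{H},\mathcal{P}_{i+1}}(G)\leq\chi_{\mathcal{H},\mathcal{P}_i}(G)+1$, so this integer sequence takes every value between $\chi_{\mathcal{H}}(G)$ and $\Psi_{\mathcal{H}}(G)$; pick $i$ with $\chi_{\mathcal{H},\mathcal{P}_i}(G)=\ell$. An optimal $\mathcal{P}_i$-compatible $\mathcal{H}$-coloring then uses exactly $\ell$ colors and must be complete, since if the union of two color classes were $\mathcal{H}$-free we could merge them and stay $\mathcal{P}_i$-compatible, contradicting optimality.

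I do not anticipate a genuine obstacle here; the whole point of the general framework is that the proof of Theorem~\ref{interpolation} never used any combinatorial feature of cycles beyond closure under induced subgraphs. The only mild care needed is bookkeeping in the (di)graph case — making sure ``induced sub(di)graph'' and ``monochromatic part'' are handled uniformly — but no new idea is required.
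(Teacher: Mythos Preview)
Your proposal is correct and follows exactly the approach the paper intends: the authors explicitly state that Theorem~\ref{geninterpolation} has ``a completely analogous proof to Theorem~\ref{interpolation}'', and your write-up carries out precisely that analogy, including the $\mathcal{P}$-constrained parameter and the one-step merge lemma. The only implicit point worth noting is that $\mathcal{H}$-freeness is hereditary under induced sub(di)graphs, so singletons are $\mathcal{H}$-free whenever $\chi_{\mathcal{H}}(G)$ is finite, which justifies using $\mathcal{P}_0$.
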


More importantly, the arguments of Proposition~\ref{FVupperbound} go through to show:
\begin{proposition} \label{genFVupperbound}
For any (di)graph $G$, we have $\Psi_{\mathcal{H}}(G) \leq \tau_{\mathcal{H}}(G)+1$.
\end{proposition}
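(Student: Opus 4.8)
The plan is to directly adapt the argument of Proposition~\ref{FVupperbound} to the abstract setting. Let $F \subseteq V(G)$ be a minimum-size set such that $G - F$ is $\mathcal{H}$-free, so $|F| = \tau_{\mathcal{H}}(G)$. Suppose for contradiction that $\Psi_{\mathcal{H}}(G) \geq \tau_{\mathcal{H}}(G) + 2$, and fix a complete $\mathcal{H}$-coloring $(V_1, \ldots, V_k)$ with $k \geq |F| + 2$ color classes.

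The key step is the same pigeonhole observation as before: since each vertex of $F$ lies in exactly one color class, the classes meeting $F$ number at most $|F|$, so there are at least $k - |F| \geq 2$ color classes $V_i, V_j$ that are disjoint from $F$. Then $V_i \cup V_j \subseteq V(G) \setminus F = V(G-F)$. Since $G-F$ is $\mathcal{H}$-free, every induced sub(di)graph of $G-F$ is $\mathcal{H}$-free — this is the only place where I use that $\mathcal{H}$-freeness is defined via \emph{induced} sub(di)graphs and is therefore hereditary under taking induced sub(di)graphs. In particular $G[V_i \cup V_j]$ is $\mathcal{H}$-free, contradicting the completeness of the coloring, which requires $G[V_i \cup V_j]$ to contain a member of $\mathcal{H}$ as an induced sub(di)graph. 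Hence $\Psi_{\mathcal{H}}(G) \leq \tau_{\mathcal{H}}(G) + 1$.

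There is essentially no obstacle here; the proof is a verbatim transcription of Proposition~\ref{FVupperbound} with ``acyclic subdigraph'' replaced by ``$\mathcal{H}$-free sub(di)graph'' and ``directed cycle'' replaced by ``member of $\mathcal{H}$ as an induced sub(di)graph''. The only point worth stating explicitly is that being $\mathcal{H}$-free is inherited by induced sub(di)graphs, which is immediate from the definition: an induced sub(di)graph of $G-F$ is also an induced sub(di)graph of $G$, and any copy of an $H \in \mathcal{H}$ induced in it would be induced in $G-F$. I would keep the write-up to two or three sentences, citing the analogy with Proposition~\ref{FVupperbound} rather than repeating every detail.
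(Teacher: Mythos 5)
Your proof is correct and is exactly the argument the paper intends: the paper itself only remarks that ``the arguments of Proposition~\ref{FVupperbound} go through,'' and your transcription (pigeonhole on the color classes missing $F$, plus the observation that $\mathcal{H}$-freeness is hereditary under induced sub(di)graphs) is precisely that adaptation. Nothing to add.
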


Conversely, we say that a host class $\mathcal{G}$ is \emph{$\tau_{\mathcal{H}}$-bounded} if there is a function $f$ such that $\tau_{\mathcal{H}}(G)\leq f(\Psi_{\mathcal{H}}(G))$ for all $G\in\mathcal{G}$. 
By pairing the parts of a maximum complete $\mathcal{H}$-coloring one obtains a set of $\lfloor \frac{\Psi_{\mathcal{H}}(G)}{2} \rfloor$ disjoint members of $\mathcal{H}$ in $G$. Thus, ${\Psi_{\mathcal{H}}(G)}\leq {2}\nu_{\mathcal{H}}(G)+1$ and we get that $\tau_{\mathcal{H}}$-boundedness is a strengthening of the Erd\H{o}s-P\'osa property:
\begin{proposition}\label{strengthening}
 If $\mathcal{G}$ is $\tau_{\mathcal{H}}$-bounded, then $\mathcal{G}$ has the Erd\H{o}s-P\'osa property.
\end{proposition}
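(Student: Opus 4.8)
The plan is to chain together the three inequalities already assembled in the paragraph preceding the statement, so the proof is essentially a formal consequence of the pairing bound. First I would recall that for any host (di)graph $G\in\mathcal{G}$ the trivial inequality $\nu_{\mathcal{H}}(G)\leq\tau_{\mathcal{H}}(G)$ holds, so one direction of the Erd\H{o}s--P\'osa relation is automatic, and only a bound of the form $\tau_{\mathcal{H}}(G)\leq f'(\nu_{\mathcal{H}}(G))$ needs to be produced.

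Next I would make the pairing argument precise. Fix a maximum complete $\mathcal{H}$-coloring of $G$ with color classes $V_1,\ldots,V_m$, where $m=\Psi_{\mathcal{H}}(G)$. Partition $\{1,\ldots,2\lfloor m/2\rfloor\}$ into the $\lfloor m/2\rfloor$ consecutive pairs $\{2i-1,2i\}$; by completeness of the coloring, each union $V_{2i-1}\cup V_{2i}$ contains an induced copy of some member of $\mathcal{H}$. Since distinct pairs involve disjoint vertex sets, these copies are pairwise vertex-disjoint and thus form an induced $\mathcal{H}$-packing of size $\lfloor m/2\rfloor$. Hence $\nu_{\mathcal{H}}(G)\geq\lfloor\Psi_{\mathcal{H}}(G)/2\rfloor$, equivalently $\Psi_{\mathcal{H}}(G)\leq 2\nu_{\mathcal{H}}(G)+1$.

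Finally I would invoke $\tau_{\mathcal{H}}$-boundedness: there is a function $f$ with $\tau_{\mathcal{H}}(G)\leq f(\Psi_{\mathcal{H}}(G))$ for all $G\in\mathcal{G}$, and after replacing $f$ by $n\mapsto\max_{k\leq n}f(k)$ we may assume $f$ is non-decreasing. Combining with the previous step,
\[
\tau_{\mathcal{H}}(G)\leq f(\Psi_{\mathcal{H}}(G))\leq f\bigl(2\nu_{\mathcal{H}}(G)+1\bigr),
\]
so setting $f'(n):=f(2n+1)$ exhibits the Erd\H{o}s--P\'osa property of $\mathcal{G}$ with respect to $\mathcal{H}$.

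As for the main obstacle: there is none of real substance here, since the statement is purely formal once the pairing inequality $\Psi_{\mathcal{H}}(G)\leq 2\nu_{\mathcal{H}}(G)+1$ is in hand. The only points worth a word of care are that the induced copies of members of $\mathcal{H}$ extracted from the paired color classes are genuinely vertex-disjoint (they live in disjoint unions of color classes), and that one should pass to a monotone version of $f$ before substituting the bound for $\Psi_{\mathcal{H}}$; both are routine.
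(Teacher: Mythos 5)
Your proof is correct and takes essentially the same route as the paper: the paper derives $\Psi_{\mathcal{H}}(G)\leq 2\nu_{\mathcal{H}}(G)+1$ by exactly this pairing of the color classes of a maximum complete $\mathcal{H}$-coloring and then composes with the $\tau_{\mathcal{H}}$-boundedness function. Your additional remarks (vertex-disjointness of the extracted copies, replacing $f$ by a non-decreasing majorant) are details the paper leaves implicit but are handled correctly.
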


A classical result for the achromatic number states that the size of a minimum vertex cover is bounded in terms of the achromatic number of a graph (\cite{farber}), i.e, the class $\mathcal{G}$ of all graphs is $\tau_{K_2}$-bounded.
Theorem~\ref{FVlowerbound} shows that the class of simple graphs is also $\tau_{\mathcal{C}}$-bounded with respect to the class $\mathcal{C}$ of cycles, thus in a sense strengthening the classical Erd\H{o}s-P\'osa result~\cite{erdosposa}. Furthermore, Theorem~\ref{minor-closed} can be generalized in a straight-forward way to yield:

\begin{theorem} \label{genminor-closed}
Let $\mathcal{G}$ be (the orientations of) a non-trivial minor-closed class of simple undirected graphs. If $\mathcal{G}$ has the Erd\H{o}s-P\'{o}sa property, and if the members of $\mathcal{H}$ are weakly connected, then $\mathcal{G}$ is $\tau_{\mathcal{H}}$-bounded.
\end{theorem}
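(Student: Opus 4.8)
\textbf{Proof proposal for Theorem~\ref{genminor-closed}.}

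The plan is to mimic the proof of Theorem~\ref{minor-closed} almost verbatim, extracting exactly which ingredients were used and checking that the hypotheses of Theorem~\ref{genminor-closed} supply them. Recall the two facts that drove the argument for $\mathcal{C}$ (the class of cycles): first, a non-trivial minor-closed class $\mathcal{G}$ of simple graphs has bounded chromatic number (Mader's theorem together with the fact that a forbidden minor forces bounded degeneracy), hence bounded independence ratio, i.e.\ $\alpha(G)\geq \tfrac1d|V(G)|$ for all $G\in\mathcal{G}$; second, the Erd\H{o}s--P\'osa property for $\mathcal{G}$ gives $\tau_{\mathcal{H}}(G)\leq g(\nu_{\mathcal{H}}(G))$, so it suffices to bound $\nu_{\mathcal{H}}(G)$ in terms of $\Psi_{\mathcal{H}}(G)$. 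The target inequality will again be of the form $\nu_{\mathcal{H}}(G)\leq d\binom{\Psi_{\mathcal{H}}(G)+1}{2}$, whence $\tau_{\mathcal{H}}(G)\leq g\bigl(d\binom{\Psi_{\mathcal{H}}(G)+1}{2}\bigr)=:f(\Psi_{\mathcal{H}}(G))$.

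First I would set $k:=\nu_{\mathcal{H}}(G)$ and fix a packing $\{H_1,\dots,H_k\}$ of pairwise vertex-disjoint induced copies of members of $\mathcal{H}$ in $G$; without loss of generality each $H_i$ is induced in $G$. Form the simple model graph $M$ on $k$ vertices with an edge between $i$ and $j$ whenever some edge of $G$ joins $V(H_i)$ to $V(H_j)$. As in Theorem~\ref{minor-closed}, $M$ is obtained from $G$ by deleting the vertices outside $\bigcup_i V(H_i)$ and contracting each connected $V(H_i)$ to a point -- here is the one place the weak connectivity of the members of $\mathcal{H}$ is used, since contracting $V(H_i)$ is a legitimate minor operation only when $H_i$ is connected -- so $M\in\mathcal{G}$, and hence $\alpha(M)\geq \tfrac1d k$. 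Pick an independent set $I\subseteq V(M)$ with $|I|\geq \tfrac1d k$ and let $G'$ be the subgraph of $G$ induced by $\bigcup_{i\in I}V(H_i)$. By choice of $I$, there are no edges of $G'$ between distinct $H_i$'s, so $G'$ is the disjoint union of $|I|$ induced copies of members of $\mathcal{H}$.

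Now I would produce a complete $\mathcal{H}$-coloring of $G'$ using many colors, exactly as in Proposition~\ref{examples}(4): if $|I|\geq \binom{m}{2}$, fix an injection from $\binom{[m]}{2}$ into $I$ and, for the component $H_i$ indexed by $\{a,b\}$, partition $V(H_i)$ into two nonempty parts colored $a$ and $b$ respectively. Since each $H_i$ is $\mathcal{H}$-free after deleting any vertex -- because $H_i$ is an \emph{induced} minimal member and removing a vertex destroys it... more carefully: one argues as before that a proper $\mathcal{H}$-coloring exists with each color class $\mathcal{H}$-free, and that merging colors $a,b$ recreates the copy $H_i$ indexed by $\{a,b\}$, so the coloring is complete. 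This gives $\Psi_{\mathcal{H}}(G')\geq m$, and by induced-subgraph monotonicity of $\Psi_{\mathcal{H}}$ (the analogue of Corollary~\ref{InducedMonotonicity}, which follows from Proposition~\ref{genFVupperbound}-type reasoning, or more directly since a complete coloring of an induced subgraph extends) we get $\Psi_{\mathcal{H}}(G)\geq \Psi_{\mathcal{H}}(G')\geq m$. Taking $m$ maximal with $\binom{m}{2}\leq |I|$ yields $|I|<\binom{\Psi_{\mathcal{H}}(G)+1}{2}$, so $k\leq d\,|I|<d\binom{\Psi_{\mathcal{H}}(G)+1}{2}$, completing the bound.

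The routine points are the minor-monotonicity argument for $M\in\mathcal{G}$ (clean, given weak connectivity) and the counting with $\binom{m}{2}$. The one step deserving care -- and the main obstacle -- is verifying that each induced copy $H_i$ of a member of $\mathcal{H}$ genuinely admits a two-part split into $\mathcal{H}$-free pieces whose merge recreates an induced member of $\mathcal{H}$; for cycles this is the trivial "cut into two paths" observation, but for a general guest class one should note that it suffices to split off a single vertex (any member minus a vertex is $\mathcal{H}$-free if we take the packed copies to be \emph{induced} and, say, vertex-minimal, or simply argue that a complete $\mathcal{H}$-coloring of the disjoint union $G'$ with $\geq m$ colors exists by an analogue of the construction in Proposition~\ref{examples}(4)). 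I would phrase this via the already-proven interpolation machinery if possible: it is enough to exhibit \emph{some} complete $\mathcal{H}$-coloring of $G'$ with $\geq m$ colors, and $\chi_{\mathcal{H}}(G')\leq m\leq$ (something), so Theorem~\ref{geninterpolation} hands us one -- but one still needs the lower bound $\Psi_{\mathcal{H}}(G')\geq m$, which is precisely the explicit construction above. This is the only place where a hypothesis on $\mathcal{H}$ beyond weak connectivity could conceivably be needed, and I would flag in the writeup that members of $\mathcal{H}$ being, e.g., $2$-connected or simply "every member has $\geq 2$ vertices" makes the split-off-a-vertex argument go through cleanly.
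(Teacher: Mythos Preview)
Your proposal is correct and follows exactly the route the paper intends: the paper gives no explicit proof of Theorem~\ref{genminor-closed}, merely asserting that ``Theorem~\ref{minor-closed} can be generalized in a straight-forward way,'' and your argument carries out precisely that generalization---Erd\H{o}s--P\'osa reduces to bounding $\nu_{\mathcal{H}}$ by $\Psi_{\mathcal{H}}$, the model graph $M$ lies in $\mathcal{G}$ because weak connectivity of each $H_i$ makes the contraction a legitimate minor operation, and a large independent set in $M$ yields a disjoint union of many induced members of $\mathcal{H}$ which is then $2$-colored pairwise as in Proposition~\ref{examples}(4).

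The one point you rightly flag---that an induced $H_i$ must split into two $\mathcal{H}$-free parts whose union recovers a member of $\mathcal{H}$---is not addressed in the paper at all, and your fix (choose each packed $H_i$ to be vertex-minimal with respect to containing an induced member of $\mathcal{H}$, then split off a single vertex) is the clean way to do it; this only requires that no member of $\mathcal{H}$ is a single vertex, which is a degenerate case anyway since then no nonempty $\mathcal{H}$-coloring exists. The induced-subgraph monotonicity of $\Psi_{\mathcal{H}}$ that you invoke also goes through by the same argument as in the lemma preceding Corollary~\ref{InducedMonotonicity}, again under this harmless assumption.
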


On the other hand, while the class $\mathcal{G}$ of all digraphs  has the Erd\H{o}s-P\'osa property with respect to the class $\vec{\mathcal{C}}$ of directed cycles, see~\cite{Reed1996}, our construction in Proposition~\ref{nobound} shows that $\mathcal{G}_g$, the class of digraphs with directed girth at least $g$, is not $\tau_{\vec{\mathcal{C}}}$ -bounded, for any fixed $g \ge 2$. Hence, the strengthening of the Erd\H{o}s-P\'osa property claimed in Proposition~\ref{strengthening} is strict and leaves open finding good lower bounds for the adichromatic number. In general, we believe that $\tau$-boundedness deserves further investigation in particular with respect to the plenty of Erd\H{o}s-P\'osa properties that have been studied, see e.g.~\cite{EPweb}.

\subsection{A-vertex-arboricity of a random graph}\label{subsec:random}
Let $G(n,p)$ for $n \in \mathbb{N}, p \in (0,1)$ denote a graph generated randomly according to the Erd\H{o}s-R\'{e}nyi model by taking $n$ vertices, and connecting a pair of vertices with probability $p$, independently from all other pairs of vertices. The following bounds follow directly from a result of Boll\'{o}bas on the chromatic number of a random graph (\cite{Bollobas1988}). They show that for a fixed probability $p$ (independent of $n$), the a-vertex arboricity and the minimum size of a feedback vertex set are at most a logarithmic factor apart for asymptotically almost all graphs.
\begin{proposition}
There are constants $c_1,c_2>0$ such that for any fixed $p \in (0,1)$, we have that a.a.s.
$$\ava(G(n,p)) \ge c_1\log\left(\frac{1}{1-p}\right)\frac{n}{\log n},$$ $$\tau(G(n,p)) \leq n-c_2\frac{\log(n)}{\log(\frac{1}{1-p})}.$$
\end{proposition}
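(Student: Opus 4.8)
The plan is to reduce both inequalities to the classical result of Bollob\'as on the chromatic number of dense random graphs, which states that a.a.s. $\chi(G(n,p)) = (1+o(1))\frac{n}{2\log_b n}$ where $b = \frac{1}{1-p}$, and more importantly that a.a.s. the independence number satisfies $\alpha(G(n,p)) = (2+o(1))\log_b n$. I would first record the simple observation that for any graph $G$, one has $\ava(G) \geq \chi'(G)$ where $\chi'$ is some trivial lower bound coming from independent sets: indeed any partition into independent sets is in particular a partition into forests, and merging two independent color classes creates an edge hence a ``cycle'' only in the multigraph sense — so this naive approach needs care. Instead, the cleaner route is to use $\ava(G) \geq \va(G) \geq \lceil \chi(G)/2 \rceil$ via Observation~\ref{divsadi} together with the standard fact that a graph with vertex-arboricity $a$ has chromatic number at most $2a$ (two-color each forest). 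Combining this with Bollob\'as's lower bound $\chi(G(n,p)) \geq (1-o(1))\frac{n}{2\log_b n}$ gives $\ava(G(n,p)) \geq (1-o(1))\frac{n}{4\log_b n}$, which after absorbing constants yields the claimed bound with a suitable $c_1$; note $\log_b n = \frac{\log n}{\log b} = \frac{\log n}{\log(1/(1-p))}$, which is exactly where the $\log\bigl(\tfrac{1}{1-p}\bigr)$ factor in the statement comes from.

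For the second inequality, I would use the complementary handle on $\tau$ via the independence number: deleting all but an induced forest leaves a feedback vertex set, and any independent set is in particular an induced forest, so $\tau(G) \leq n - \alpha(G)$ for every graph $G$. Actually one can do slightly better — delete the complement of a largest induced forest — but $n - \alpha(G)$ already suffices. Plugging in Bollob\'as's result $\alpha(G(n,p)) \geq (2-o(1))\log_b n = (2-o(1))\frac{\log n}{\log(1/(1-p))}$ a.a.s. gives $\tau(G(n,p)) \leq n - (2-o(1))\frac{\log n}{\log(1/(1-p))}$, and choosing $c_2$ slightly below $2$ absorbs the $o(1)$ error term, yielding the stated bound.

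The two bounds together do give the advertised conclusion that $\ava$ and $\tau$ are within a logarithmic factor: $n - \tau(G(n,p))$ and $\ava(G(n,p))$ are both of order $\frac{n}{\log n}$ up to constants depending on $p$, so $\tau \leq f(\ava)$-type behavior holds in a strong quantitative sense here. The main obstacle — really the only non-routine point — is making sure one quotes the correct form of Bollob\'as's theorem: the sharp two-sided concentration of $\alpha(G(n,p))$ around $2\log_b n$ (equivalently the sharp asymptotics of $\chi$) is what is needed, and one should cite~\cite{Bollobas1988} for exactly this. Everything else (the inequalities $\va \geq \lceil \chi/2\rceil$, $\tau \leq n - \alpha$, and $\ava \geq \va$) is elementary, and the constants $c_1, c_2$ can be taken to be any values strictly less than $\tfrac14$ and $2$ respectively after the $o(1)$ terms are absorbed for $n$ large.
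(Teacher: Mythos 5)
Your proposal is correct and follows essentially the same route as the paper: Bollob\'as's asymptotics for $\chi(G(n,p))$ combined with the elementary inequalities $\ava \ge \va \ge \tfrac12\chi$ and $\tau \le n-\alpha$. The only cosmetic difference is that you quote the concentration of $\alpha(G(n,p))$ directly, whereas the paper derives the same bound via $\alpha(G) \ge n/\chi(G)$ from the upper bound on $\chi$; the constants you obtain agree with the paper's.
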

\begin{proof}
The main result from \cite{Bollobas1988} shows that a.a.s., we have
$$\chi(G(n,p))=\left(\frac{1}{2}+o(1)\right)\log\left(\frac{1}{1-p}\right)\frac{n}{\log n}.$$ The claim now follows from the simple estimates
$$\ava(G) \ge \va(G) \ge \frac{1}{2}\chi(G)$$ and
$$\tau(G)=n-\max\{|V(F)|:F \text{ is induced forest}\} \leq n-\alpha(G) \leq n-\frac{n}{\chi(G)}$$ 
which hold for any simple graph $G$.
\end{proof}
This result gives a hint that in fact, it might be possible to find good upper bounds for the function $f:\mathbb{N} \rightarrow \mathbb{N}$ from Theorem \ref{FVlowerbound}, which are much better than the multiply exponential bounds one would obtain from our proof. The best lower bound we know so far is from the simple example in Proposition \ref{examples}, (4), which only shows that $f(k)=\Omega(k^2)$.  We would be very interested in any improvements of bounds (lower and upper) on the asymptotic growth of the function $f$. Another question goes towards the bevaviour of the adichromatic number on random digraphs.

\paragraph*{Acknowledgments:} The authors wish to thank Mika Olsen and Gabriela Araujo-Pardo for discussions after their talk about diachromatic numbers at ACCOTA 2018. This research was initiated during a stay of Raphael Steiner and Kolja Knauer at FernUniversit\"at Hagen and continued at Technische Universit\"at Berlin. Kolja Knauer was partially supported by both of the former and ANR grant DISTANCIA: ANR-17-CE40-0015.

\bibliography{references}
\bibliographystyle{alpha}

\end{document}